\newtheorem{theorem}{Theorem}[section]
\newtheorem{lemma}[theorem]{Lemma}
\newtheorem{corollary}[theorem]{Corollary}
\newtheorem{proposition}[theorem]{Proposition}
\theoremstyle{definition}
\theoremstyle{remark}
\newtheorem{remark}[theorem]{Remark}
\numberwithin{equation}{section}
\begin{document}
\title[Harmonic and Schr\"odinger functions on shrinkers]
{Harmonic and Schr\"odinger functions of polynomial growth on gradient shrinking Ricci solitons}

\author{Jia-Yong Wu}
\address{Department of Mathematics, Shanghai University, Shanghai 200444, China}
\email{wujiayong@shu.edu.cn}
\author{Peng Wu}
\address{Shanghai Center for Mathematical Sciences, Fudan University, Shanghai 200433, China}
\email{wupenguin@fudan.edu.cn}
\date{\today}
\subjclass[2010]{Primary 53C21; Secondary 35C11, 35K05}
\keywords{gradient shrinking Ricci soliton; Liouville theorem; gradient estimate;
harmonic function; caloric function; Schr\"odinger function}
\thanks{}

\begin{abstract}
In this paper, we study harmonic and caloric functions of polynomial growth on
a complete non-compact gradient shrinking Ricci soliton. On one hand, when the
scalar curvature satisfies at least quadratic decay, we prove that the space
of harmonic functions with fixed polynomial growth degree is finite dimensional.
We also prove analogous results for ancient caloric functions. On the other hand,
without any curvature condition, we prove sharp finite dimensional estimates for the
space of Schr\"odinger functions with fixed polynomial growth degree.
\end{abstract}
\maketitle

\section{Introduction}\label{Int1}
For a complete Riemannian manifold $(M,g)$, a smooth function $f$ on $M$ and a real constant
$d\ge 0$, let $\mathcal{H}^f_d(M)$ denote the linear space of $f$-harmonic functions with polynomial
growth of degree at most $d$. That is, $u\in \mathcal{H}^f_d(M)$ if
\[
\Delta_f u=0,
\]
where $f$-Laplacian $\Delta_f:=\Delta-\langle\nabla f,\nabla\rangle$, and for some point
$p\in M$ and a constant $C(u)$ depending on $u$,
\[
\sup_{B_p(r)}|u|\le C(u)(1+r)^d
\]
for sufficiently large $r$, where $B_p(r)$ is the geodesic ball with radius $r$ center
$p$. Clearly, the dimension of $\mathcal{H}^f_d(M)$ is independent of the base point
$p$. When $f$ is constant, that is, in the case of harmonic functions, we simply write
notation $\mathcal{H}_d(M)$ instead of $\mathcal{H}^f_d(M)$.

On an $n$-dimensional manifold $(M,g)$ with nonnegative Ricci curvature, Yau
\cite{[Yau75]} developed a gradient estimate technique to show that any bounded harmonic
function on $M$ must be constant, namely, $\mathrm{dim}\mathcal{H}_0(M)=1$. Moreover,
Yau \cite{[Yau]} conjectured that $\mathrm{dim}\mathcal{H}_d(M)$ is finite for each
$d\ge 0$. For this conjecture, Cheng \cite{[Cheng]} firstly observed that the Cheng-Yau's
gradient estimate \cite{[ChYa]} can be used to show that $\mathrm{dim}\mathcal{H}_d(M)=1$
for $d<1$. For $d=1$, Li and Tam \cite{[LiTa]} proved $\mathrm{dim}\mathcal{H}_1(M)\le n+1$
and the equality is achieved by the Euclidian space $\mathbb{R}^n$. Meanwhile, Cheeger,
Colding and Minicozzi \cite{[CCM]} proved that if $\mathcal{H}_1(M)=n+1$, then $M$ is
isometric to $\mathbb{R}^n$ (see \cite{[Li95]} for the K\"ahler case). Later, Li and Tam
\cite{[LiTa2]}, and Kasue \cite{[Ka]} independently solved the case $n=2$ of Yau's conjecture.
Finally, Colding and Minicozzi \cite{[CoMi]} completely settled Yau's conjecture and proved
that for each $d\ge 1$,
\[
\mathrm{dim}\mathcal{H}_d(M)\le C(n)d^{n-1}
\]
for some constant $C(n)$ depending only on $n$. Here the power of $d$ is sharp, because
$\mathrm{dim}\mathcal{H}_d(\mathbb{R}^n) \sim\frac{2}{(n-1)!}d^{n-1}$ as $d\to \infty$.
For more related results, see \cite{[CLM]}, \cite{[CoMi97]}, \cite{[CoMi98a]}, \cite{[CoMi98b]},
\cite{[HaLi]}, \cite{[Hua]}, \cite{[Hx]}, \cite{[Le]}, \cite{[Li]}, \cite{[Li00]},
\cite{[LW1]}, \cite{[LW2]}, \cite{[Xu]} and references therein.

An $n$-dimensional \emph{gradient shrinking Ricci soliton (or shrinker)} (see \cite{[Ham]})
is a triple $(M, g, f)$ of $n$-dimensional smooth manifold $M$, Riemannian metric $g$ and
a smooth potential function $f$ on $(M,g)$, such that
\begin{align}\label{Eq1}
\mathrm{Ric}+\mathrm{Hess}\,f=\tfrac{1}{2}g,
\end{align}
where $\text{Ric}$ is the Ricci curvature of $(M,g)$ and $\text{Hess}\,f$ is the Hessian
of $f$. The shrinker, generalization of the Einstein manifold, can be
viewed as the self-similar solution to the Ricci flow \cite{[Ham]}. So its geometric
structure is an important subject in the Ricci flow theory.

There exists an interesting phenomenon that non-compact shrinkers often share
same geometric properties of non-compact manifolds with nonnegative Ricci
curvature (even Einstein manifolds), such as the volume growth, Liouville theorem,
$\epsilon$-regularity theorem, Cheeger-Gromov compactness theorem, heat kernel estimate,
splitting result, etc. The reader can consult to \cite{[CaZh]}, \cite{[GeJi]},
\cite{[GeZh]}, \cite{[HaMu]}, \cite{[Hs]}, \cite{[LiWa]}, \cite{[MuWa2]}, \cite{[MuWa3]},
\cite{[MuWa4]}, \cite{[Wu]}, \cite{[WuWu]}, \cite{[WWW]} and reference therein.

Therefore it is natural to ask

\vspace{.1in}

\noindent \textbf{Question}. \emph{For a complete non-compact shrinker $(M, g, f)$,
are $\mathcal{H}^f_d(M)$, $\mathcal{H}_d(M)$ and their related spaces finite
dimensional for each $d\ge 0$?}

\vspace{.1in}

In this paper, we will study this question and an analogous question for ancient
caloric functions on shrinkers. We also discuss similar problems for their
related Schr\"odinger operators. These studies will be useful for understanding
the function theory of shrinkers.

In Corollary 1.6 of \cite{[WuWu]}, we proved that any nonnegative $L_f^1$-integrable
$f$-subharmonic function on shrinker $(M, g, f)$ must be constant. Here, a function
$u$ is called $L_f^1$-integrable, if $\int_M |u|e^{-f}dv<\infty$; $u$ is called
$f$-subharmonic if $\Delta_f u\ge 0$. Suppose $u\in\mathcal{H}^f_d(M)$ for some
$d\ge 0$, then $u^2$ is nonnegative $f$-subharmonic, and $u^2\in L^1_f(M)$ since
$M$ has at most Euclidean volume growth and $f$ grows quadratically (see \cite{[CaZh]}).
Therefore, $u^2\equiv \mathrm{const}$. In other words,
\begin{theorem}\label{main0}
On any non-compact shrinker $(M,g, f)$,
$\mathrm{dim}\mathcal{H}^f_d(M)=1$ for each $d\geq0$.
\end{theorem}

Munteanu and Wang \cite{[MuWa]} proved the space of $f$-harmonic function with 
fixed polynomial growth degree is finite dimensional when $\mathrm{Ric}_f\ge 0$ 
and $f$ is bounded, where $\mathrm{Ric}_f:=\mathrm{Ric}+\mathrm{Hess}\,f$. Ge 
and Zhang \cite{[GeZh]} proved that there does not exist non-constant positive 
$f$-harmonic function on non-compact shrinkers.

Next we focus on the dimension estimate of $\mathcal{H}_d(M)$ on shrinkers. Normalizing
$f$ by a constant in \eqref{Eq1},  we may assume that \eqref{Eq1} satisfies (see the
explanation in Section \ref{sec2} or \cite{[LLW]})
\begin{equation}\label{Eq2}
\mathrm{R}+|\nabla f|^2=f \quad\mathrm{and}\quad \int_M (4\pi)^{-\frac n2}e^{-f} dv=e^{\mu},
\end{equation}
where $\mathrm{R}$ is the scalar curvature of $(M,g)$ and $\mu=\mu(g,1)$ is the entropy
functional of Perelman \cite{[Pe]} (see the definition in Section \ref{sec2}). For the
Ricci flow, the Perelman's functional is time-dependent; but for a shrinker, it is a
fixed constant depending only on the metric $g$.

When the scalar curvature has at least quadratic decay, we prove the following
dimension estimate for $\mathcal{H}_d(M)$ on a shrinker.
\begin{theorem}\label{main1}
Let $(M,g, f)$ be an $n$-dimensional complete non-compact shrinker satisfying \eqref{Eq1}
and \eqref{Eq2}. Assume there exists a nonnegative constant $c_0$ such that the scalar curvature
\[
\mathrm{R}(x)\cdot r^2(x,o)\le c_0,
\]
where $r(x,o)$ is the distance function from point $x\in M$ to a fixed point $o\in M$.
Then the dimension of $\mathcal{H}_d(M)$ is finite for each
$d\ge 0$. Indeed, for each $d\geq0$,
\[
\mathrm{dim}\mathcal{H}_d(M)\le C(n)(c_0+1)^{\frac n2}e^{-\mu}\,4^d
\]
for some constant $C(n)$ depending only on $n$.
\end{theorem}

Our proof relies on a mean value inequality (see Corollary \ref{corelli})
which contains a scalar curvature term, and this enables the scalar curvature
to satisfy some decay condition. Our curvature assumption is suitable to
so-called asymptotically conical shrinkers; see \cite{[MuWa4]}. Recently,
Munteanu, Schulze and Wang \cite{[MSW]} proved that such shrinker has finitely
many ends. On the K\"ahler case, Munteanu and Wang \cite{[MuWa3]} proved
that the space of holomorphic functions with fixed polynomial growth degree
is finite dimensional.

Since the scalar curvature decay condition in Theorem \ref{main1} is specialized,
it is expected to remove this condition. Interestingly, if we consider linear
space $\mathcal{H}_d(a, M)$ instead of $\mathcal{H}_d(M)$, we can prove a finite
dimensional estimate without any assumption. We say that function
$u\in \mathcal{H}_d(a, M)$, if it satisfies the Schr\"odinger equation
\[
(\Delta-a\mathrm{R})u=0,
\]
where $a>0$ is a constant, and for some point $p\in M$ and a constant $C(u)$ depending on $u$,
\[
\sup_{B_p(r)}|u|\le C(u)(1+r)^d
\]
for sufficiently large $r$. The motivation of considering the Schr\"odinger equation comes from
the Perelman's operator $-\Delta+\tfrac14\mathrm{R}$ \cite{[Pe]} and the conformal
Laplacian $-\Delta+\tfrac{n-2}{4(n-1)}\mathrm{R}$ related to the Yamabe problem \cite{[LP]}.
Recently, Li and Wang \cite{[LiWa]} proved a Sobolev inequality containing a scalar curvature
term on shrinkers (see Lemma \ref{lem2}), which also inspires us to consider the
Schr\"odinger operator. For the dimensional estimate of $\mathcal{H}_d(a, M)$, we prove that

\begin{theorem}\label{mainpoten}
Let $(M,g, f)$ be an $n$-dimensional complete non-compact shrinker satisfying \eqref{Eq1}
and \eqref{Eq2}. Then the dimension of $\mathcal{H}_d(a,M)$ is finite for each
$d\ge 0$. Indeed, for each $d\geq0$,
\[
\mathrm{dim}\mathcal{H}_d(a,M)\le C(n)(a^{-1}+1)^{\frac n2}e^{-\mu}\,4^d
\]
for some constant $C(n)$ depending on $n$. For each $d\geq 1$, we could have another
estimate
\[
\mathrm{dim}\mathcal{H}_d(a,M)\le C(n)(a^{-1}+1)^{\frac n2}e^{-\mu}\,d^{n-1}
\]
for some constant $C(n)$ depending on $n$.
\end{theorem}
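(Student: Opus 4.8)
The plan is to adapt the Colding--Minicozzi dimension-estimate machinery, as streamlined by Li, to the weighted/Schrödinger setting on a shrinker, using the Sobolev inequality with scalar-curvature term of Li--Wang (Lemma \ref{lem2}) as the only geometric input. The first step is to establish the key analytic ingredient: a local mean value inequality for nonnegative subsolutions of the Schrödinger operator $\Delta - a\mathrm{R}$, of the form
\[
\sup_{B_p(r/2)} u^2 \le \frac{C(n)(a^{-1}+1)^{n/2}}{\mathrm{Vol}_f(B_p(r))}\int_{B_p(r)} u^2\, e^{-f}\, dv
\]
(or the analogous statement with the weighted volume replaced by a normalized quantity involving $e^{-\mu}$), valid on a fixed shrinker without curvature assumptions. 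This is where the scalar-curvature term in the Sobolev inequality is essential: for $u$ with $(\Delta - a\mathrm{R})u = 0$, the function $u^2$ satisfies $\Delta u^2 = 2|\nabla u|^2 + 2a\mathrm{R}\,u^2 \ge 2a\mathrm{R}\,u^2$, and a Moser iteration using Lemma \ref{lem2} absorbs the "bad" $\mathrm{R}\,u^2$ term — the constant $a$ controls how much room one has, which is precisely why $(a^{-1}+1)^{n/2}$ appears. This replaces the role played by Corollary \ref{corelli} (and its scalar-curvature hypothesis) in the proof of Theorem \ref{main1}.

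The second step is to record a reverse Poincaré / Caccioppoli-type inequality and, more importantly, to set up the almost-orthogonality argument. Given a finite-dimensional subspace $K \subset \mathcal{H}_d(a,M)$ of dimension $k$, one equips $K$ with the inner product $\langle u,v\rangle_r = \int_{B_p(r)} uv\, e^{-f}\, dv$ and studies the "trace" $\sum_i \int_{B_p(r)} u_i^2\, e^{-f}\, dv$ over an $L^2_f(B_p(r))$-orthonormal basis $\{u_i\}$. The standard linear-algebra lemma (Colding--Minicozzi / Li, Lemma~4.x in the survey literature) shows that for any $\beta>1$ and any choice of radii, one can bound $k$ in terms of how fast this trace can grow as $r$ increases, together with the pointwise bound $\sum_i u_i^2(x) \le k \sup_i u_i^2(x)$ combined with the mean value inequality from Step 1. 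Concretely, the mean value inequality converts the pointwise supremum into an integral over a slightly larger ball, the polynomial growth hypothesis controls that integral by $C(u)^2 r^{2d}$ times the weighted volume, and the Euclidean volume growth of shrinkers together with the normalization \eqref{Eq2} produces the explicit constant $C(n)(a^{-1}+1)^{n/2}e^{-\mu}$.

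The third step is to iterate the growth estimate on a geometric sequence of radii $r_j = \beta^j r_0$ to extract the exponent: the doubling-type inequality for the trace yields $k \le C(n)(a^{-1}+1)^{n/2}e^{-\mu}\,\beta^{2d+\text{const}}$, and optimizing over $\beta$ (taking $\beta$ close to $1$ for the $d^{n-1}$ bound via a more careful Colding--Minicozzi counting of the number of balls needed, and $\beta = 2$ for the cruder $4^d$ bound) gives the two stated estimates; since $d$ was arbitrary and $K$ was an arbitrary finite-dimensional subspace, finite-dimensionality of $\mathcal{H}_d(a,M)$ follows. The main obstacle I anticipate is Step 1: one must run Moser iteration with the weighted measure $e^{-f}dv$ and the scalar-curvature-corrected Sobolev inequality simultaneously, keeping explicit track of the dependence on $a$ (so that $a^{-1}$, not something worse, appears) and confirming that the relevant Sobolev constant on a shrinker is genuinely a dimensional constant times $e^{-\mu}$ — the quadratic growth of $f$ and the bound $\mathrm{R}+|\nabla f|^2 = f$ should make the weighted volume of balls comparable to $e^{\mu}r^n$, but the bookkeeping there is the delicate part. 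A secondary subtlety is that, unlike in Theorem \ref{main1}, here there is no decay assumption on $\mathrm{R}$, so every estimate must be uniform in the geometry and rely only on the soliton identities and Lemma \ref{lem2}.
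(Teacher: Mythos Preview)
Your overall strategy matches the paper's: establish a mean value inequality for nonnegative subsolutions of $\Delta - a\mathrm{R}$ via Moser iteration against the Li--Wang Sobolev inequality (this is exactly Corollary~\ref{corelli2}, with the $(a^{-1}+1)^{n/2}$ arising precisely as you describe), then feed it into Li's linear-algebra/trace argument (Lemma~\ref{lem1} plus the analogue Lemma~\ref{parallemm2}), taking $\beta=2$ for the $4^d$ bound and $\beta=1+\tfrac{1}{2d}$ for the $d^{n-1}$ bound.

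There is, however, one concrete misstep in your plan that would cause trouble if followed literally: you propose to work throughout with the \emph{weighted} measure $e^{-f}\,dv$ and the weighted inner product $\langle u,v\rangle_r=\int_{B_p(r)}uv\,e^{-f}\,dv$, and you expect the weighted volume of balls to be ``comparable to $e^{\mu}r^n$''. This is false on a shrinker: since $f(x)\sim \tfrac14 r(x)^2$, the weighted volume $\int_{B_p(r)}e^{-f}\,dv$ is \emph{bounded} as $r\to\infty$ (indeed $\int_M e^{-f}\,dv=(4\pi)^{n/2}e^{\mu}$ by \eqref{Eq2}), so it does not grow like $r^n$ and cannot play the role of ``volume'' in Li's Lemma~\ref{lem1} or in the mean value inequality in the form you wrote. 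The paper sidesteps this entirely by working with the \emph{unweighted} measure $dv$: the Sobolev inequality of Lemma~\ref{lem2} is already stated for $dv$, the mean value inequality (Corollary~\ref{corelli2}) reads
\[
\sup_{B_p(r/2)} u \;\le\; \frac{C(n)(a^{-1}+1)^{n/2}}{e^{\mu}\,r^n}\int_{B_p(r)} u\,dv,
\]
with $e^{\mu}r^n$ (not a weighted volume) as the normalizing factor, and the unweighted Euclidean volume/area growth (Proposition~\ref{volesti2}, Remark~\ref{areacomp}) supplies the $\tau=n$ input for Lemma~\ref{lem1} and the integral estimate in Lemma~\ref{parallemm2}. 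Once you drop the weight, your three steps go through exactly as in the paper.
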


\begin{remark}\label{example}
The exponent $n-1$ in the second estimate of Theorem \ref{mainpoten} is sharp. Indeed,
on the Gaussian shrinker $(\mathbb{R}^n, g_E, \frac{|x|^2}{4})$, where
$g_E$ is the standard flat Euclidean metric, we know $\mathrm{R}=0$, $\mu=0$
and $\mathrm{dim}\mathcal{H}_d(\mathbb{R}^n)\sim\frac{2}{(n-1)!}d^{n-1}$
as $d\to \infty$.
\end{remark}

\begin{remark}
The results of Theorems \ref{main1} and \ref{mainpoten} can be applied to harmonic and Schr\"odinger
sections on a vector bundle by a similar argument.
\end{remark}

Theorem \ref{mainpoten} does not need any assumption because its proof relies
on another local mean value inequality without any assumption
(see Corollary \ref{corelli2}). From this point, we may think that the
Schr\"odinger operator on shrinkers seems to be more natural than the Laplace
operator in some ways.

Our proof method follows the work Colding-Minicozzi \cite{[CoMi],[CoMi98b]}
and Li's generalization \cite{[Li]}. Inspired by their arguments, there are
two main technical ingredients in our setting. One is a local mean value
inequality (see Corollaries \ref{corelli} and \ref{corelli2}). Its proof
depends on the Moser iteration to the Li-Wang's Sobolev inequality \cite{[LiWa]}.
The other (for the second estimate of Theorem \ref{mainpoten}) is an integral
inequality involving a class of inner products on a subspace of
$\mathcal{H}_d(a, M)$ (see Lemma \ref{parallemm2}). To prove the integral
inequality, we use an area comparison (see Remark \ref{areacomp}) instead of
a volume comparison considered by Li \cite{[Li]} on manifolds. At present,
we do not know if there exists a volume comparison on shrinkers.

There have been various Liouville type results
for $f$-harmonic functions under different conditions. The reader can refer
to Brighton \cite{[Br]}, Cao and Zhou \cite{[CaZh]}, Ge and Zhang \cite{[GeZh]},
Hua, Liu and Xia \cite{[HLX]}, Munteanu and Wang \cite{[MuWa], [MuWa1]}, Petersen
and Wylie \cite{[PeWy],[PeWy2]}, Pigola, Rimoldi and Setti \cite{[PiRS]}, Wei
and Wylie \cite{[WeWy]}, Wu \cite{[Wu]}, Wu and Wu \cite{[WuWu],[WuWu2]} and
references therein.

A natural generalization of ($f$-)harmonic function is an ancient ($f$-)caloric
function on shrinker $(M,g,f)$. Here, ancient $f$-caloric functions mean that
smooth solutions $u(x,t)$, defined on $M\times(-\infty,0]$, to the
$f$-heat equation
\[
(\Delta_f-\partial_t)u=0.
\]
Similar to harmonic function cases, when the scalar curvature has at least
quadratic decay, we show that the space of ancient caloric functions with
fixed polynomial growth degree is finite dimensional (see Theorem \ref{Main1c}).
We also show that the space of $f$-ancient caloric functions with fixed
polynomial growth degree on is finite dimensional. In particular, when the
degree is less than one, such space is one-dimensional (see Theorem \ref{main2}).
The proof of caloric results mainly depends on a recent Colding-Minicozzi's
result \cite{[CoMi2]} and the gradient estimate technique \cite{[SZ],[Wu]}.
For these results, see Section \ref{sec6} for details.

The rest of the paper is organized as follows. In Section \ref{sec2}, we recall some
properties of shrinkers. In particular, we give a Laplacian comparison and a new
volume comparison. In Section \ref{sec3}, we prove local mean value inequalities
for harmonic functions and Schr\"odinger equations on shrinkers. In Section
\ref{sec4}, adapting Li's proof idea in \cite{[Li]}, we apply the mean value
inequality for harmonic functions to prove Theorem \ref{main1}. In Section
\ref{sec5}, we apply the mean value inequality for Schr\"odinger functions to
prove Theorem \ref{mainpoten}. In Section \ref{sec6}, we prove that ($f$-)caloric
function spaces are finite dimensional on shrinkers.

\textbf{Acknowledgement}.
The authors thank Ruixuan Li for helpful discussions. The first author thanks the hospitality
of Shanghai Center for Mathematical Sciences, where part of the work was done. The first author
was partially supported by NSFS (17ZR1412800) and NSFC (11671141). The second author was partially
supported by NSFC (11701093).

\section{Preliminaries}\label{sec2}
In this section, we collect some basic facts about shrinkers, which will be
repeatedly used in the proof of theorems. In particular, we give a weighted
Laplacian comparison and a new volume comparison. For more results, see
\cite{[Caoh]}, \cite{[Chowetc]} and references therein for nice surveys.

In the rest of the whole paper, we let $C(n)$ denote a constant depending only
on dimension $n$ of shrinker $(M,g,f)$ whose value may change from line to line.

From shrinker \eqref{Eq1}, Hamilton observed that $\mathrm{R}+\Delta f=\frac n2$ and
\begin{equation}\label{condition}
C(g):=\mathrm{R}+|\nabla f|^2-(f+c)
\end{equation}
is a finite constant, where $c\in \mathbb{R}$ is a free parameter.
Ordinarily, $c$ is chosen to be zero in many literatures, e.g. \cite{[CaZh]}, \cite{[Chowetc]}.
Combining the above equations, we get
\begin{equation}\label{identitycond}
2\Delta f-|\nabla f|^2+\mathrm{R}+(f+c)-n=-C(g).
\end{equation}

By Chen's work (see Proposition 2.2 in \cite{[Chen]}), $\mathrm{R}\ge 0$.
Moreover, by Theorem 3 in \cite{[PiRS]}, $\mathrm{R}>0$ unless $(M,g,f)$
is the Gaussian shrinker $(\mathbb{R}^n, g_E, \frac{|x|^2}{4})$. By Cao-Zhou
\cite{[CaZh]}, $f$ is uniformly equivalent to the distance function
squared. Precisely, we have the following sharp estimate originally due to
Cao-Zhou \cite{[CaZh]} and later improved by Haslhofer-M\"uller \cite{[HaMu]}.

\begin{lemma}\label{potenesti}
Let $(M,g, f)$ be an $n$-dimensional complete non-compact shrinker
satisfying \eqref{Eq1} and \eqref{condition}. Then there exists a point $p_0\in M $ where
$f$ attains its infimum (may be not unique). Moreover, $f$ satisfies
\[
\tfrac 14\left[\left(r(x,p_0)-5n\right)_{+}\right]^2\le f(x)+c+C(g)\le\tfrac 14\left(r(x,p_0)+\sqrt{2n}\right)^2,
\]
where $r(x,p_0)$ is a distance function from $p_0$
to $x$, and $a_+=\max\{a,0\}$ for $a\in \mathbb{R}$.
\end{lemma}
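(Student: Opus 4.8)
The plan is to reduce the statement to a normalized potential and then run the Cao--Zhou/Haslhofer--M\"uller scheme, whose heart is an integrated Ricci bound along minimizing geodesics. First I would set $\tilde f:=f+c+C(g)$. Since $\nabla\tilde f=\nabla f$, the identity \eqref{condition} together with Hamilton's $\mathrm{R}+\Delta f=\tfrac n2$ become $\mathrm{R}+|\nabla\tilde f|^2=\tilde f$ and $\mathrm{R}+\Delta\tilde f=\tfrac n2$, and $\tilde f=\mathrm{R}+|\nabla\tilde f|^2\ge 0$ because $\mathrm{R}\ge 0$ on shrinkers. Renaming $\tilde f$ as $f$, it then suffices to prove that $f$ attains its infimum at some $p_0$ and that $\tfrac14\big((r(x,p_0)-5n)_+\big)^2\le f(x)\le\tfrac14\big(r(x,p_0)+\sqrt{2n}\big)^2$; reverting to the original potential at the end replaces $f$ by $f+c+C(g)$, which is the claim. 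If $(M,g,f)$ is the Gaussian soliton the inequalities are checked by hand, so I may assume $\mathrm{R}>0$ everywhere by the scalar-curvature rigidity recalled above; then $f=\mathrm{R}+|\nabla f|^2>0$, so $\sqrt f$ is smooth with $|\nabla\sqrt f|=|\nabla f|/(2\sqrt f)\le\tfrac12$.

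Next I would fix a basepoint $p$ and a unit-speed minimizing geodesic $\gamma:[0,r_0]\to M$, and set $\varphi(s):=f(\gamma(s))$. The soliton equation \eqref{Eq1} gives $\varphi''(s)=\mathrm{Hess}\,f(\dot\gamma,\dot\gamma)=\tfrac12-\mathrm{Ric}(\dot\gamma,\dot\gamma)$, while $|\varphi'(s)|\le|\nabla f(\gamma(s))|\le\sqrt{\varphi(s)}$. Integrating twice and applying Fubini yields
\[
\varphi(r_0)=\varphi(0)+\varphi'(0)\,r_0+\frac{r_0^2}{4}-\int_0^{r_0}(r_0-s)\,\mathrm{Ric}(\dot\gamma,\dot\gamma)\,ds .
\]
The crucial ingredient is an upper bound for the last integral. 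Since $\gamma$ is minimizing, the index form is nonnegative, so for any Lipschitz $\phi$ with $\phi(0)=\phi(r_0)=0$, summing the second variation of arc length over a parallel orthonormal frame normal to $\dot\gamma$ gives $\int_0^{r_0}\phi^2\,\mathrm{Ric}(\dot\gamma,\dot\gamma)\,ds\le(n-1)\int_0^{r_0}(\phi')^2\,ds$. Choosing $\phi$ comparable to $\sqrt{r_0-s}$, cut off to vanish near $s=0$, and using $|\varphi'|\le\sqrt\varphi$ to absorb the boundary terms produced by integration by parts, one obtains an estimate of the form $\int_0^{r_0}(r_0-s)\,\mathrm{Ric}(\dot\gamma,\dot\gamma)\,ds\le A(n)\,r_0+B(n)$. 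Producing dimensional constants sharp enough to yield exactly $5n$, rather than a cruder constant, is the main obstacle: this is precisely where the soliton structure and Haslhofer--M\"uller's refined choice of $\phi$ are needed.

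With that estimate in hand everything assembles. Applying the displayed identity with $\varphi(0)\ge0$ and $\varphi'(0)\ge-\sqrt{\varphi(0)}$ gives $f(\gamma(r_0))\ge\tfrac14 r_0^2-C(n,p)\,r_0\to\infty$; since every point of $M$ lies on a minimizing geodesic from $p$ (completeness), $f$ is proper, hence attains its infimum at some $p_0$ on the compact sublevel set $\{f\le f(p)+1\}$, which proves the first assertion. At $p_0$ one has $\nabla f(p_0)=0$ and $\Delta f(p_0)\ge0$, so $\mathrm{R}(p_0)=\tfrac n2-\Delta f(p_0)\le\tfrac n2$ and $f(p_0)=\mathrm{R}(p_0)+|\nabla f(p_0)|^2=\mathrm{R}(p_0)\le\tfrac n2$. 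The upper bound then follows from $|\nabla\sqrt f|\le\tfrac12$: for every $x$, $\sqrt{f(x)}\le\sqrt{f(p_0)}+\tfrac12 r(x,p_0)\le\sqrt{n/2}+\tfrac12 r(x,p_0)$, i.e. $f(x)\le\tfrac14\big(r(x,p_0)+\sqrt{2n}\big)^2$. For the lower bound I would apply the displayed identity along a minimizing geodesic from $p_0$, where now $\varphi'(0)=0$ and $\varphi(0)\ge0$; combined with $\int_0^{r_0}(r_0-s)\,\mathrm{Ric}(\dot\gamma,\dot\gamma)\,ds\le A(n)r_0+B(n)$ this gives $f(x)\ge\tfrac14 r_0^2-A(n)r_0-B(n)$ with $r_0=r(x,p_0)$, and completing the square (using $f\ge0$ in the range $r_0\le 5n$, where the asserted bound is vacuous) yields $f(x)\ge\tfrac14\big((r_0-5n)_+\big)^2$ once the constants are calibrated.
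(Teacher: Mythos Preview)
The paper does not prove this lemma at all: it simply attributes the statement to Cao--Zhou and the sharpened constants to Haslhofer--M\"uller, and then moves on. Your sketch is therefore not competing with anything in the paper; it is a faithful outline of the Cao--Zhou/Haslhofer--M\"uller argument itself (normalize so that $\mathrm R+|\nabla f|^2=f$, use $\mathrm R\ge 0$ to get $|\nabla\sqrt f|\le\tfrac12$, feed the index-form inequality along a minimizing geodesic into $\varphi''=\tfrac12-\mathrm{Ric}(\dot\gamma,\dot\gamma)$, deduce properness of $f$, and read off both bounds from the minimum point $p_0$ where $f(p_0)\le n/2$). So as a comparison: same approach, because there is nothing in the paper to differ from.

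One technical caution on your execution of the lower bound. The test function you propose, $\phi$ ``comparable to $\sqrt{r_0-s}$'', has $(\phi')^2=\tfrac{1}{4(r_0-s)}$, which is \emph{not} integrable near $s=r_0$, so the second-variation inequality $\int\phi^2\,\mathrm{Ric}\le (n-1)\int(\phi')^2$ gives nothing as stated. The standard route (and what Cao--Zhou and Haslhofer--M\"uller actually do) is to bound the \emph{unweighted} integral $\int_0^{r_0}\mathrm{Ric}(\dot\gamma,\dot\gamma)\,ds$ using the piecewise-linear $\phi$ that equals $1$ on $[1,r_0-1]$ and is linear on the end intervals; the discrepancy between $\phi^2$ and $1$ lives on two unit intervals and is handled via $\mathrm{Ric}=\tfrac12-\varphi''$, integration by parts, and $|\varphi'|\le\sqrt\varphi$. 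This yields $\varphi'(r_0)\ge \tfrac{r_0}{2}-c(n)-c\sqrt{\varphi(0)}-c\sqrt{\varphi(r_0)}$, and since $|\varphi'|\le\sqrt\varphi$ one solves for $\sqrt{\varphi(r_0)}$. Your weighted-integral formulation can also be pushed through with a regularized $\phi$, but then the boundary corrections near $s=r_0$ again produce terms of size $\sqrt{\varphi(r_0)}$ that must be reabsorbed; you should make that step explicit rather than leave it at ``absorb the boundary terms''. Apart from this, your outline is sound, and your honest flag that calibrating to the exact constant $5n$ is the delicate step is accurate.
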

\begin{remark}\label{potenest}
On a shrinker with \eqref{Eq1} and \eqref{Eq2}, for any point $p\in M$
(maybe not a infimun point of $f$), by Theorem 1.1 of Cao-Zhou \cite{[CaZh]}
and later refined by Chow et al. \cite{[Chowetc]},
\[
\tfrac 14\left[(r(x,p)-2\sqrt{f(p)}-4n+\tfrac 43)_{+}\right]^2
\le f(x)\le\tfrac 14\left(r(x,p)+2\sqrt{f(p)}\right)^2
\]
for any $x\in M$.
\end{remark}

Lemma \ref{potenesti} implies $\mathrm{R}(x)\le\tfrac 14(r(x,p_0)+\sqrt{2n})^2$.
However, it remains an interesting question if $\mathrm{R}(x)$ is bounded from
above by a constant.

By a Cao-Zhou's result \cite{[CaZh]}, and the Munteanu's improvement \cite{[Mun]}
(see also Haslhofer-M\"uller \cite{[HaMu]}), we have
\begin{lemma}\label{volesti}
Let $(M,g, f)$ be an $n$-dimensional complete non-compact shrinker
with $p_0\in M$ as in Lemma \ref{potenesti}. For any $r\ge 0$,
\[
V_{p_0}(r)\le C(n)r^n,
\]
where $V_{p_0}(r)$ denotes the volume of the geodesic ball $B_{p_0}(r)$.
\end{lemma}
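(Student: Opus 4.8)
The plan is to derive this Euclidean volume bound from the quadratic comparison for the potential function in Lemma \ref{potenesti}. First, since adding a constant to $f$ preserves \eqref{Eq1}, normalize so that $\mathrm{R}+|\nabla f|^{2}=f$; then $f\ge 0$, the function $\rho:=2\sqrt f$ satisfies $|\nabla\rho|\le 1$, and Lemma \ref{potenesti} says $\rho(x)$ and $r(x,p_0)$ differ by a bounded amount. In particular $B_{p_0}(r)\subseteq\{f<\tfrac14(r+\sqrt{2n})^{2}\}$, so it suffices to prove $\mathrm{Vol}(\{f<t\})\le C(n)\,t^{n/2}$ for large $t$ (for $t$ in a bounded range the bound is elementary, since near its minimum the volume of $\{f<t\}$ has the Euclidean profile) and then set $t=\tfrac14(r+\sqrt{2n})^{2}$.

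Write $V(t):=\mathrm{Vol}(\{f<t\})$. For almost every regular value $t$ the level set $\Sigma_t=\{f=t\}$ is a smooth hypersurface and the coarea formula gives $V'(t)=\int_{\Sigma_t}|\nabla f|^{-1}\,dA$. Hamilton's identity $\Delta f=\tfrac n2-\mathrm{R}$ together with the divergence theorem yields
\[
\int_{\Sigma_t}|\nabla f|\,dA=\int_{\{f<t\}}\Delta f\,dV=\tfrac n2 V(t)-\int_{\{f<t\}}\mathrm{R}\,dV\le\tfrac n2 V(t),
\]
while on $\Sigma_t$ one has $|\nabla f|^{2}=f-\mathrm{R}=t-\mathrm{R}\le t$. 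Feeding these into the Cauchy--Schwarz inequality $\mathrm{Area}(\Sigma_t)^{2}\le\big(\int_{\Sigma_t}|\nabla f|\,dA\big)\big(\int_{\Sigma_t}|\nabla f|^{-1}\,dA\big)$, and controlling the boundary and scalar-curvature error terms that arise, one is led to a differential inequality of the shape $\tfrac{d}{dt}\big(t^{-n/2}V(t)\big)\le(\text{integrable lower-order term})$, whose integration from a fixed large $t_0$ gives $V(t)\le C(n)\,t^{n/2}$.

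The genuine difficulty is in this last step: on the part of $\Sigma_t$ where $|\nabla f|$ is small --- equivalently where $\mathrm{R}$ is comparable to $f$ --- the relation $|\nabla f|^{2}=t-\mathrm{R}$ gives no control on $|\nabla f|^{-1}$, so one must bring in finer information about the sublevel sets and about $\int\mathrm{R}\,dV$ supplied by the soliton structure. This is precisely the analysis of Cao--Zhou \cite{[CaZh]}, sharpened by Munteanu \cite{[Mun]} and by Haslhofer--M\"uller \cite{[HaMu]}, which upgrades the crude estimate to the dimension-only constant $C(n)$; I would simply invoke their argument for this point, the only input needed from the present section being the reduction to sublevel sets via Lemma \ref{potenesti}.
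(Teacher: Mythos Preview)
The paper does not give a proof of this lemma at all: it is stated purely as a citation of Cao--Zhou \cite{[CaZh]}, with the dimension-only constant coming from Munteanu \cite{[Mun]} and Haslhofer--M\"uller \cite{[HaMu]}. Your proposal ends up in the same place --- you ultimately invoke exactly those references for the substantive part --- so in that sense you are aligned with the paper, only with more of the Cao--Zhou machinery sketched out along the way.

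One small correction to your sketch, since you went to the trouble of outlining it: the Cauchy--Schwarz step on $\mathrm{Area}(\Sigma_t)$ is not how the cited argument runs, and it is not clear it yields the differential inequality you claim. The actual mechanism is more direct: on $\Sigma_t$ one has $|\nabla f|^{2}=t-\mathrm{R}$, so
\[
\int_{\Sigma_t}|\nabla f|\,dA=\int_{\Sigma_t}(t-\mathrm{R})\,|\nabla f|^{-1}\,dA=t\,V'(t)-\chi'(t),
\]
where $\chi(t)=\int_{\{f<t\}}\mathrm{R}\,dV$. Equating this with the divergence-theorem expression $\int_{\Sigma_t}|\nabla f|\,dA=\tfrac{n}{2}V(t)-\chi(t)$ gives the exact ODE $nV(t)-2tV'(t)=2\chi(t)-2\chi'(t)$, and it is the analysis of this ODE (using $\mathrm{R}\ge 0$ and the soliton identities to control $\chi$) that produces $V(t)\le C(n)\,t^{n/2}$. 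No Cauchy--Schwarz is needed, and your worry about the region where $|\nabla f|$ is small is already absorbed into the $\chi'$ term.
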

Lemma \ref{volesti} gives a volume growth at any point. This can be regarded
as an analog of the Bishop's theorem for manifolds with nonnegative Ricci curvature.

\begin{proposition}\label{volesti2}
Let $(M,g, f)$ be an $n$-dimensional complete non-compact shrinker with $p_0\in M$
as in Lemma \ref{potenesti}. For any point $p\in M$,
\[
V_p(r)\le C(n)r^n
\]
for all $r\ge r(p,p_0)$.
\end{proposition}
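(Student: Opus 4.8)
The plan is to deduce Proposition \ref{volesti2} from Lemma \ref{volesti} by a simple triangle-inequality inclusion of geodesic balls, reducing the volume bound at an arbitrary point $p$ to the volume bound at the infimum point $p_0$. First I would observe that for any $x\in B_p(r)$ one has $r(x,p_0)\le r(x,p)+r(p,p_0)<r+r(p,p_0)$, so that $B_p(r)\subseteq B_{p_0}\bigl(r+r(p,p_0)\bigr)$ and hence $V_p(r)\le V_{p_0}\bigl(r+r(p,p_0)\bigr)$ by monotonicity of volume under inclusion. Next, I would restrict to the range $r\ge r(p,p_0)$, which is exactly what is needed so that the shift can be absorbed: there $r+r(p,p_0)\le 2r$, and monotonicity of $V_{p_0}(\cdot)$ together with Lemma \ref{volesti} gives $V_p(r)\le V_{p_0}(2r)\le C(n)(2r)^n=2^n C(n)\,r^n$. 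Renaming the dimensional constant $2^n C(n)$ as $C(n)$ completes the argument.

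Since the only inputs are the global Euclidean volume bound at the infimum point (Lemma \ref{volesti}, itself built on Cao--Zhou and Munteanu) and the triangle inequality, there is really no delicate step here; the one point worth flagging is the role of the hypothesis $r\ge r(p,p_0)$. This condition is what allows the additive error $r(p,p_0)$ to be swallowed into a purely dimensional constant; for $r<r(p,p_0)$ one would instead get a bound whose constant depends on $r(p,p_0)$, i.e.\ on the location of $p$ relative to where $f$ is minimized, which is why the statement is phrased only for $r\ge r(p,p_0)$. This is the natural analogue on shrinkers of the Bishop-type volume upper bound for manifolds of nonnegative Ricci curvature, with $p_0$ playing the role of a preferred center.
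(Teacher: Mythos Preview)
Your proof is correct and is essentially identical to the paper's own argument: both use the inclusion $B_p(r)\subset B_{p_0}(r+r(p,p_0))$ from the triangle inequality, apply Lemma \ref{volesti} at $p_0$, and absorb the shift using $r+r(p,p_0)\le 2r$ when $r\ge r(p,p_0)$ to get the constant $2^nC(n)$. The only cosmetic difference is that you invoke monotonicity to pass to $V_{p_0}(2r)$ before applying the lemma, whereas the paper applies the lemma at radius $r+r(p,p_0)$ and then bounds $(r+r(p,p_0))^n\le (2r)^n$; these are the same step in a different order.
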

\begin{proof}[Proof of Proposition \ref{volesti2}]
For any point $p\in M$ and $r>0$,
\begin{equation}\label{inset}
B_p(r)\subset B_{p_0}(r+r(p,p_0)).
\end{equation}
By Lemma \ref{volesti}, there exists a constant $C(n)$ such that
$V_{p_0}(r+r(p,p_0))\leq C(n)(r+r(p,p_0))^n$ for all $r$. Letting
$r\ge r(p,p_0)$, then $V_{p_0}(r+r(p,p_0))\leq 2^nC(n)r^n$.
Combining this with \eqref{inset} gives the conclusion.
\end{proof}

\begin{remark}\label{areacomp}
In geodesic polar coordinates of $(M,g,f)$, let $\mathcal{A}(r,\theta)$ be the
volume element of $g$ and let $A_p(r):=\int_{S^{n-1}}\mathcal{A}(t,\theta)d\theta^{n-1}$
be the volume of geodesic sphere $S(p,r)=\{x\in M|d(x,p)=r\}$.
Since $V_p(r)=\int^r_0A(p,t)dt$, by Proposition \ref{volesti2},
\begin{equation}\label{areaup}
A_p(r)\le C(n)r^{n-1}
\end{equation}
for $r\ge r(p,p_0)$. This estimate will be used in the proof of Lemma
\ref{parallemm2}.
\end{remark}

Recall that on an $n$-dimensional complete manifold $(M,g)$,
the Perelman's $\mathcal{W}$-entropy functional \cite{[Pe]} is defined as
\[
\mathcal{W}(g,\widetilde{f},\tau)
:=\int_M\Big[\tau\Big(|\nabla \widetilde{f}|^2+\mathrm{R}\Big)+\widetilde{f}-n\Big](4\pi\tau)^{-n/2}e^{-\widetilde{f}}dv
\]
for some $\widetilde{f}\in C^\infty(M)$ and $\tau>0$ if it is finite,
and the Perelman's $\mu$-entropy functional \cite{[Pe]} is defined as
\[
\mu(g,\tau):=\inf\Big\{\mathcal{W}(g,\widetilde{f},\tau)\Big|\widetilde{f}\in C_0^\infty(M)\,\,\,\mathrm{with}\,\,\, \int_M(4\pi\tau)^{-n/2}e^{-\widetilde{f}}dv=1\Big\}.
\]
In general, the minimizer of $\mu(g,\tau)$ may not exist on
a noncompact manifold. However, by the Carrillo-Ni's argument \cite{[CaNi]}
(see also \cite{[HaMu]}), on complete (possible non-compact) shrinker,
$f+c$ is always a minimizer of $\mu(g,1)$. Therefore,
\begin{equation*}
\begin{aligned}
\mu(g,1)=\mathcal{W}(g,f+c,1)
&:=\int_M\Big(|\nabla f|^2+\mathrm{R}+(f+c)-n\Big)(4\pi)^{-n/2}e^{-(f+c)}dv\\
&=\int_M\Big(2\Delta f-|\nabla f|^2+\mathrm{R}+(f+c)-n\Big)(4\pi)^{-n/2}e^{-(f+c)}dv\\
&=-C(g),
\end{aligned}
\end{equation*}
where $c$ is a constant such that $\int_M(4\pi)^{-n/2}e^{-(f+c)}dv=1$.
In the last equality, we used \eqref{identitycond}. Notice that the above
integral formulas always hold due to Lemma \ref{potenesti}. Hence
\[
\mathrm{R}+|\nabla f|^2-(f+c)=-\mu(g,1)
\]
with $\int_M(4\pi)^{-n/2}e^{-(f+c)}dv=1$.
If we let $c=\mu(g,1)$, then we get \eqref{Eq2} in introduction.

On a shrinker, there exists the following weighted
Laplacian comparison, which will be used in Section \ref{sec6}.

\begin{proposition}\label{meancom}
Let $(M,g, f)$ be an $n$-dimensional complete non-compact shrinker satisfying
\eqref{Eq1} and \eqref{Eq2}. Let $\gamma$ be a minimizing normal geodesic in
$M$ with $p=\gamma (0)$ and $x=\gamma (r)$. Then
\[
\Delta_f\,r(p,x)\le-\tfrac r2+3\sqrt{f(p)}+4n-\tfrac 43
\]
for all $r\ge 2\sqrt{f(p)}+4n-\tfrac 43$. In particular, $\Delta_f\,r(p,x)\le 0$
for all $r\ge 6\sqrt{f(p)}+8n-\tfrac 83$.
\end{proposition}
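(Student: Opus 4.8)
The plan is to run the classical second–variation (index lemma) estimate for the distance function $r(p,\cdot)$ along the prescribed minimizing geodesic, feeding in the soliton structure only through the restriction of \eqref{Eq1} to $\gamma$ together with the potential bound of Remark \ref{potenest}. First I would fix the unit–speed minimizing geodesic $\gamma:[0,r]\to M$ with $\gamma(0)=p$, $\gamma(r)=x$; if $x$ lies in the cut locus of $p$ one passes to the usual smooth upper support function for $r(p,\cdot)$ at $x$, so it suffices to argue assuming $r$ is smooth near $x$. Choosing a parallel orthonormal frame $\{E_1,\dots,E_{n-1},E_n=\gamma'\}$ along $\gamma$ and a piecewise–linear cutoff $\phi$ with $\phi(0)=0$, $\phi(t)=t/t_0$ on $[0,t_0]$ and $\phi\equiv 1$ on $[t_0,r]$ (for a breakpoint $t_0\in(0,r]$ to be fixed later), inserting the fields $\phi E_i$ into the index form and summing over $i$ gives
\[
\Delta r(x)\le (n-1)\int_0^r(\phi')^2\,dt-\int_0^r\phi^2\,\mathrm{Ric}(\gamma',\gamma')\,dt,
\]
an inequality that uses only that $\gamma$ is minimizing on $[0,r]$, with no curvature hypothesis.

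Next I would substitute the soliton equation. Writing $h(t):=f(\gamma(t))$ and restricting \eqref{Eq1} to $\gamma$ (a geodesic, so $h''=\mathrm{Hess}\,f(\gamma',\gamma')$), one has $\mathrm{Ric}(\gamma',\gamma')=\tfrac12-h''$, hence
\[
\Delta r(x)\le \frac{n-1}{t_0}-\frac12\int_0^r\phi^2\,dt+\int_0^r\phi^2 h''\,dt.
\]
Integrating the last term by parts once and using $\phi(0)=0$, $\phi(r)=1$, its boundary contribution is exactly $h'(r)=\langle\nabla f,\nabla r\rangle(x)$, which cancels against the term subtracted in $\Delta_f r=\Delta r-\langle\nabla f,\nabla r\rangle$. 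Since $\int_0^r\phi^2\,dt=r-\tfrac{2t_0}{3}$, this leaves
\[
\Delta_f\,r(p,x)\le -\frac r2+\frac{t_0}{3}+\frac{n-1}{t_0}-\frac{2}{t_0^2}\int_0^{t_0}t\,h'(t)\,dt.
\]

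Finally I would bound the remaining integral. From \eqref{Eq2} and $\mathrm{R}\ge 0$ one gets $|\nabla f|^2\le f$, so $|h'(t)|\le\sqrt{f(\gamma(t))}$; combining with the upper potential bound of Remark \ref{potenest} with base point $p$ (so $r(\gamma(t),p)=t$) gives $\sqrt{f(\gamma(t))}\le \tfrac t2+\sqrt{f(p)}$, whence
\[
\Big|\frac{2}{t_0^2}\int_0^{t_0}t\,h'(t)\,dt\Big|\le\frac{2}{t_0^2}\int_0^{t_0}t\Big(\frac t2+\sqrt{f(p)}\Big)dt=\frac{t_0}{3}+\sqrt{f(p)},
\]
so that $\Delta_f\,r(p,x)\le -\tfrac r2+\tfrac{2t_0}{3}+\tfrac{n-1}{t_0}+\sqrt{f(p)}$. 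A convenient choice of $t_0$ (e.g. $t_0=1$, which in fact already yields the sharper bound $-\tfrac r2+\sqrt{f(p)}+n-\tfrac13$ valid for $r\ge1$; a larger $t_0$ reproduces the constants and range as written) gives $\Delta_f\,r(p,x)\le -\tfrac r2+3\sqrt{f(p)}+4n-\tfrac43$ for $r\ge 2\sqrt{f(p)}+4n-\tfrac43$; the ``in particular'' statement is then immediate, since the right–hand side is $\le 0$ once $r\ge 6\sqrt{f(p)}+8n-\tfrac83$.

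The genuinely delicate points are minor. First, the cut–locus reduction is harmless because $\gamma$ is assumed minimizing all the way to $x$, so the standard upper–support–function device applies. Second — and this is the step I would be most careful about — keeping the dependence on $f(p)$ linear rather than quadratic: this is precisely why one retains the factor $t$ inside $\int_0^{t_0}t\,h'(t)\,dt$ and uses the $t$–dependent potential bound there, rather than replacing $|h'|$ by its supremum over $[0,t_0]$. Neither step is a real obstruction; the only thing requiring attention is the bookkeeping of constants and the choice of the breakpoint $t_0$.
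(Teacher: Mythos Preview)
Your argument is correct and in fact slightly cleaner than the paper's. The paper proceeds via the Bochner/Riccati route: it derives $m'(s)+\tfrac{1}{n-1}m(s)^2+\mathrm{Ric}(\gamma',\gamma')\le 0$, multiplies by $s^2$, integrates over all of $[0,r]$, and after integrating by parts arrives at
\[
m_f(r)\le \frac{n-1}{r}-\frac{r}{6}-\frac{2}{r}f(\gamma(r))+\frac{2}{r^2}\int_0^r f(\gamma(s))\,ds,
\]
into which it substitutes \emph{both} the upper and the lower potential bounds of Remark~\ref{potenest} (the lower bound on $f(\gamma(r))$ is what upgrades $-r/6$ to $-r/2$). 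Your index--form computation with a piecewise--linear cutoff is a genuine variant: by keeping $\phi\equiv 1$ on $[t_0,r]$ you extract the leading $-r/2$ directly from $-\tfrac12\int_0^r\phi^2$, and then you need only the gradient bound $|\nabla f|\le\sqrt f$ together with the \emph{upper} potential bound to control the short initial segment $[0,t_0]$. This avoids the lower bound on $f$ entirely and, with $t_0=1$, already gives the sharper estimate $\Delta_f r\le -\tfrac r2+\sqrt{f(p)}+n-\tfrac13$ for $r\ge 1$, which of course implies the proposition as stated. (Your parenthetical remark that ``a larger $t_0$ reproduces the constants and range as written'' is not literally true --- no single $t_0$ yields exactly those constants --- but it is immaterial, since the $t_0=1$ bound is already stronger on the full stated range.)
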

\begin{proof}[Proof of Proposition \ref{meancom}]
We adapt Wei-Wylie's argument \cite{[WeWy]} to prove the result. For any point
$p\in M$, let $\gamma$ be the minimizing normal geodesic from $p$ to $x\in M$ such
that $\gamma(0)=p$ and $\gamma (r)=x$. Let $r(x):=d(x,p)$ be a distance function from
$p$ to $x$. Applying the Bochner formula to $r(x)$ and using the fact $|\nabla r|=1$,
\[
0=\tfrac 12\Delta|\nabla r|^2=|\text{Hess}\,r|^2+\tfrac{\partial}{\partial r}(\Delta\,r)
+\mathrm{Ric}\left(\nabla r,\nabla r\right)
\]
outside the cut locus of $p$. Here, $\text{Hess}\,r$ is the second fundamental
form of the geodesic sphere $S_p(r)$ and $\Delta\,r$ is the mean curvature
$m(r)$ of $S_p(r)$ in the outer normal direction. Let $\lambda_1,\ldots,\lambda_n$
be the eigenvalues of $\mathrm{Hess}\,r$. Without loss of generality we assume
$\lambda_1=0$, because the exponential function is a radial isometry. By the
Cauchy-Schwarz inequality,
\[
|\mathrm{Hess}\,r|^2=\lambda^2_2+\ldots+\lambda^2_n\ge \frac{(\lambda_2+\ldots+\lambda_n)^2}{n-1}=\frac{[\mathrm{tr}(\mathrm{Hess}\,r)]^2}{n-1}=\frac{(\Delta r)^2}{n-1}.
\]
Therefore, we obtain a Riccati inequality
\[
m'(r)+\frac{1}{n-1}m^2(r)+\mathrm{Ric}\left(\nabla r,\nabla r\right)\le 0
\]
along the minimizing normal geodesic $\gamma $. As in \cite{[MuWa2]},
multiplying the above inequality by $r^2$ and integrating from $0$ to $t>0$,
we have
\begin{equation}\label{boch2}
\int_0^tm^{\prime }(r) r^{2}dr+\frac{1}{n-1}%
\int_0^tm^{2}( r ) r^{2}dr+\frac 12
\int_0^tr^{2}dr\leq \int_{0}^{t}f^{\prime \prime }(r)
r^{2}dr,
\end{equation}
where we used \eqref{Eq1} and $f^{\prime\prime}(r):=\text{Hess}\,f\left(\nabla r,\nabla r\right)
=\frac{d^2}{dr^2}(f\circ\gamma)(r)$. Integrating the first and the last terms in \eqref{boch2}
by parts and rearranging terms,
\[
m(t)t^2+\frac{1}{n-1}\int_0^t\Big(m(r)r-(n-1)\Big)^2dr\le(n-1)t-\frac{t^3}{6}
+t^2f'(t)-2\int_0^tf'(r)rdr.
\]
Discarding the non-negative second term above, we get
\begin{equation}\label{keineq}
m_f(t) \le \frac{n-1}{t}-\frac{t}{6}-\frac{2}{t^{2}}\int_0^tsf^{\prime}(s) ds,
\end{equation}
where $m_f(t):=m(t)-f'(t)$. Integrating the last term by parts yields
\begin{equation}\label{boch3}
m_f(t) \leq \frac{n-1}{t}-\frac{t}{6}-\frac{2}{t}f(t)+\frac{2}{t^{2}}\int_{0}^{t}f(s)ds.
\end{equation}
By Remark \ref{potenest},
\[
\tfrac 14\left[(r(x,p)-2\sqrt{f(p)}-4n+\tfrac 43)_{+}\right]^2
\le f(x)\le \tfrac 14\left(r(x,p)+2\sqrt{f(p)}\right)^2
\]
for any $x\in M$. Substituting this into  \eqref{boch3} yields
\[
m_f(r)\le\frac{n-1}{r}-\frac r6-\frac{1}{2r}\left[\left(r-2\sqrt{f(p)}-4n+\frac 43\right)_{+}\right]^2
+\frac{1}{2r^2}\int_0^r\left(s+2\sqrt{f(p)}\right)^2 ds
\]
for any $r>0$. Now we choose $r\geq 2\sqrt{f(p)}+4n-\frac 43$, and integrate the last term
in the above inequality. We finally get that
\[
m_f(r)\le-\tfrac r2+3\sqrt{f(p)}+4n-\tfrac 43
\]
for all $r\geq 2\sqrt{f(p)}+4n-\frac 43$. Then the result follows by $m_f(r)=\Delta_f\,r$.
\end{proof}

On a shrinker, Cao and Zhou \cite{[CaZh]} proved an interesting weak volume
comparison for sub-level balls. We observe that there exists a new volume
comparison for geodesic balls.
\begin{proposition}\label{compari}
Let $(M,g, f)$ be an $n$-dimensional complete non-compact shrinker satisfying
\eqref{Eq1} with $p_0\in M$ as in Lemma \ref{potenesti}. For any point $p\in M$,
\[
V_p(R_2)-V_p(R_1)\le \tfrac{1}{n}e^{\frac n2}\big(1.1 {R_2}^n-0.9{R_1}^n\big)
\]
for any $R_2\ge R_1\ge(\sqrt[n]{1.1}-1)^{-1}\,r(p,p_0)$.
\end{proposition}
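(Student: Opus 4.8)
The plan is to transfer Cao--Zhou's sub-level volume comparison to geodesic balls by means of their potential estimate, and then to convert the resulting bound --- which is naturally expressed in terms of radii measured from $p_0$ --- into the stated inequality using the lower bound on $R_1$. As a first ingredient, after adding a constant to $f$ (which preserves \eqref{Eq1}) we may assume \eqref{Eq2} holds; then $\inf_M f=f(p_0)=\mathrm R(p_0)$, and $0\le\mathrm R(p_0)\le\tfrac n2$ by $\mathrm R+\Delta f=\tfrac n2$ and $\Delta f(p_0)\ge0$. For the sub-level sets $D(s):=\{x\in M:f(x)<s\}$, the identities $\Delta f=\tfrac n2-\mathrm R$ and $|\nabla f|^2=f-\mathrm R$, the nonnegativity $\mathrm R\ge0$, and the co-area formula yield, as in Cao--Zhou \cite{[CaZh]} (see also \cite{[HaMu]}, \cite{[Mun]}), a monotonicity of the type ``$s\mapsto s^{-n/2}\mathrm{Vol}(D(s))$ is non-increasing'' for $s$ large, whence, for $s_2\ge s_1$ in the relevant range,
\[
\mathrm{Vol}\big(D(s_2)\big)-\mathrm{Vol}\big(D(s_1)\big)\ \le\ \kappa(n)\big(s_2^{\,n/2}-s_1^{\,n/2}\big),
\]
where $\kappa(n)$ is the sharp (Gaussian) constant, controlled effectively via $f(p_0)\le\tfrac n2$; this is the source of the factor $e^{n/2}$.

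Next, by the Cao--Zhou potential estimate (Remark \ref{potenest} applied with the known point $p_0$, together with $f(p_0)\le\tfrac n2$) one has $2\sqrt{f(p)}\le r(p,p_0)+\sqrt{2n}$, so every geodesic ball about $p$ is trapped between two sub-level sets:
\[
D\Big(\tfrac14\big[(\rho-r(p,p_0)-c(n))_+\big]^2\Big)\ \subset\ B_p(\rho)\ \subset\ D\Big(\tfrac14\big(\rho+r(p,p_0)+c(n)\big)^2\Big),
\]
with $c(n)=\sqrt{2n}+4n$, say. Taking $s_2=\tfrac14(R_2+r(p,p_0)+c(n))^2$ and $s_1=\tfrac14[(R_1-r(p,p_0)-c(n))_+]^2$, the inclusions give $V_p(R_2)-V_p(R_1)\le\mathrm{Vol}(D(s_2))-\mathrm{Vol}(D(s_1))$, which by the first step is $\le\kappa(n)(s_2^{\,n/2}-s_1^{\,n/2})=\kappa(n)\,2^{-n}\big((R_2+r(p,p_0)+c(n))^n-(R_1-r(p,p_0)-c(n))^n\big)$. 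Finally, the hypothesis $R_1\ge(\sqrt[n]{1.1}-1)^{-1}r(p,p_0)$ is precisely $R_1+r(p,p_0)\le\sqrt[n]{1.1}\,R_1$, hence also $R_2+r(p,p_0)\le\sqrt[n]{1.1}\,R_2$ (as $R_2\ge R_1$) and $R_1-r(p,p_0)\ge(2-\sqrt[n]{1.1})R_1$, while $(2-\sqrt[n]{1.1})^n\ge0.9$ for all $n\ge1$; this turns the two $n$-th powers into $\le1.1R_2^n$ and $\ge0.9R_1^n$, and combined with $\kappa(n)\,2^{-n}\le\tfrac1n e^{n/2}$ it yields the claimed inequality.

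The main obstacle is keeping all the constants effective. In the first step one must identify the sharp (Gaussian) constant $\kappa(n)$ and bound it by $\tfrac1n e^{n/2}\cdot2^{n}$; in the transfer step one must absorb the additive errors $r(p,p_0)+c(n)$ appearing in the ball/sub-level sandwich into the multiplicative factors $1.1$ and $0.9$ --- the hypothesis on $R_1$ is precisely what makes $R_1$ dominate $r(p,p_0)$, and one then checks that the remaining slack accommodates the dimensional term $c(n)$ as well.
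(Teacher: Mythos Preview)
Your approach via sub-level sets $D(s)=\{f<s\}$ is not the paper's, and it introduces a gap that the stated hypothesis does not close. When you sandwich $B_p(\rho)$ between sub-level sets using the potential estimate, you pick up the additive dimensional error $c(n)=\sqrt{2n}+4n$ on both sides, so after the Cao--Zhou step you are left with
\[
V_p(R_2)-V_p(R_1)\ \le\ \kappa(n)\,2^{-n}\Big[\big(R_2+r(p,p_0)+c(n)\big)^n-\big(R_1-r(p,p_0)-c(n)\big)^n\Big].
\]
The hypothesis $R_1\ge(\sqrt[n]{1.1}-1)^{-1}r(p,p_0)$ tells you only that $r(p,p_0)\le(\sqrt[n]{1.1}-1)R_1$; it says nothing about $c(n)$. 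In particular, take $p=p_0$: then $r(p,p_0)=0$ and the hypothesis is vacuous ($R_1\ge0$), yet you still need $(R_2+c(n))^n\le1.1R_2^n$ and $(R_1-c(n))^n\ge0.9R_1^n$, which fail for $R_1,R_2$ of order $c(n)$ or smaller. So the phrase ``the remaining slack accommodates the dimensional term $c(n)$'' is where the argument breaks: there is no remaining slack.

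The paper avoids this altogether by never passing through sub-level sets. It first proves, from the Riccati inequality in the proof of Proposition~\ref{meancom}, the pointwise area bound $\mathcal A(p_0,r)\le e^{f(p_0)}r^{n-1}\le e^{n/2}r^{n-1}$ for \emph{all} $r>0$, which integrates to
\[
V_{p_0}(r_2)-V_{p_0}(r_1)\ \le\ \tfrac{1}{n}e^{n/2}\big(r_2^{\,n}-r_1^{\,n}\big)\qquad(r_2\ge r_1\ge0).
\]
Then it compares $B_p$-balls with $B_{p_0}$-balls directly via the triangle inequality, $V_p(R_2)-V_p(R_1)\le V_{p_0}(R_2+r(p,p_0))-V_{p_0}(R_1-r(p,p_0))$, which introduces only the shift $r(p,p_0)$ and no $c(n)$. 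Now the hypothesis on $R_1$ is exactly what is needed to turn $R_2+r(p,p_0)$ and $R_1-r(p,p_0)$ into $\sqrt[n]{1.1}\,R_2$ and $\sqrt[n]{0.9}\,R_1$. Your route would at best prove the weaker variant described in Remark~2.9 (constants close to $1$ but with $R_1$ required to exceed an $n$-dependent threshold), not the proposition as stated.
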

\begin{remark}
Constants 1.1 and 0.9 are not unique. Indeed, the number 1.1 can be replaced by
any slightly bigger number than 1, and the number 0.9 can be replaced by any
slightly smaller number than 1; at this time $R_2\ge R_1$ are chosen much larger
than the previous case.
\end{remark}
\begin{proof}[Proof of Proposition \ref{compari}]
For a infimum point $p_0\in M$ of $f$, in geodesic polar coordinates, the volume element
is written as $dv=\mathcal{A}(p_0,r,\theta)dr\wedge d\theta^{n-1}$,
where $d\theta^{n-1}$ is the volume element of the unit sphere $S^{n-1}$ in $T_{p_0}M$.
Following the proof trick of Theorem 2.2 in \cite{[MuWa2]}, from \eqref{keineq},
by using Lemma \ref{potenesti} and $f(p_0)\le \frac n2$, we finally get
$\mathcal{A}(p_0,r)\le e^{f(p_0)}r^{n-1}\le e^{\frac n2}r^{n-1}$ for all $r>0$.
Integrating this from $r_1$ to $r_2(\ge r_1)$ with respect to the $r$-variable,
\begin{equation}\label{difference}
V_{p_0}(r_2)-V_{p_0}(r_1)\le\tfrac{1}{n}e^{\frac n2}(r^n_2-r^n_1).
\end{equation}

On the other hand, for any point $p\in M$ and $R_2\ge R_1\ge r(p,p_0)$,
$V_p(R_2)\leq V_{p_0}(R_2+r(p,p_0))$ and $V_p(R_1)\geq V_{p_0}(R_1-r(p,p_0))$.
Therefore,
\begin{equation}
\begin{aligned}\label{voldiff}
V_p(R_2)-V_p(R_1)&\le V_{p_0}(R_2+r(p,p_0))-V_{p_0}(R_1-r(p,p_0))\\
&\le \tfrac{1}{n}e^{\frac n2}\left[\big(R_2+r(p,p_0)\big)^n-\big(R_1-r(p,p_0)\big)^n\right],
\end{aligned}
\end{equation}
where we used \eqref{difference} in the second inequality above. Now we choose $R_2$
and $R_1$ sufficiently large such that $R_2\ge R_1\ge(\sqrt[n]{1.1}-1)^{-1}\,r(p,p_0)$,
which implies that
\[
R_2+r(p,p_0)\le \sqrt[n]{1.1} R_2 \quad\mathrm{and}\quad  R_1-r(p,p_0)\ge(2-\sqrt[n]{1.1})R_1\ge \sqrt[n]{0.9}R_1.
\]
Substituting two above estimates into \eqref{voldiff} proves the result.
\end{proof}

\section{Mean value inequality}\label{sec3}
Recently, Li and Wang \cite{[LiWa]} applied the Perelman's entropy functional
and the Markov semigroup technique of Davies \cite{[Dav]} to obtain a local
Sobolev inequality on shrinkers.
\begin{lemma}[Li-Wang \cite{[LiWa]}]\label{lem2}
Let $(M,g, f)$ be an $n$-dimensional shrinker satisfying \eqref{Eq1}
and \eqref{Eq2}. Then for each $u\in C^{\infty}_0(B_p(r))$, where 
$p\in M$ and $r>0$,
\begin{equation}\label{sobo}
\left(\int_{B_p(r)} u^{\frac{2n}{n-2}}\,dv\right)^{\frac{n-2}{n}} \le C(n) e^{-\frac{2\mu}{n}} \int_{B_p(r)}\left(4|\nabla u|^2+\mathrm{R}\,u^2\right) dv.
\end{equation}
\end{lemma}
The appearance of Sobolev inequality \eqref{sobo} is similar to the classical
Sobolev inequality on compact manifolds. By Li-Li-Wang's work (see Lemma 2.5
in \cite{[LLW]}), we have
\[
(4\pi)^{\frac n2}e^{-2^{4n+7}}\cdot e^{\mu}
\le V_{p_0}(1)\le(4\pi)^{\frac n2}e^n\cdot e^{\mu}
\]
on shrinker $(M,g, f)$, which means that $e^{\mu}$ is almost equivalent
to the volume of unit ball $B_{p_0}(1)$. Here $p_0\in M$ is a infimum point of $f$.

Using Lemma \ref{lem2}, we get a local mean value inequality for the
heat equation by the standard Moser iteration, which is a key
step to prove Theorem \ref{main1}.

\begin{theorem}\label{prop}
Let $(M,g, f)$ be an $n$-dimensional shrinker
satisfying \eqref{Eq1} and \eqref{Eq2}.  Assume there exists a nonnegative constant
$c_0$ such that
\[
\mathrm{R}(x)\cdot r^2(x,o)\le c_0.
\]
Fix $0<m<\infty$. There exists a
positive constant $C(n,m)$ depending on $n$ and $m$ such that for any
$s\in \mathbb{R}$, for any $0<\delta<1$, and for any smooth nonnegative solution $u$ of
\[
\left(\Delta-\partial_t\right)u(x,t)\ge0
\]
in the space-time cylinder $Q:=B_p(r)\times(s-r^2,s)$, where $p\in \partial B_o(2r)$, we have
\begin{equation}\label{Lm-mean}
\sup_{Q_\delta}\{u^m\}
\leq \frac{C(n,m)(c_0+1)^{\frac n2}}{(1-\delta)^{2+n}\,e^{\mu}\,r^{2+n}}\int_Qu^m\,\,\, dv\, dt,
\end{equation}
where $Q_\delta:=B_p(\delta r)\times (s-\delta r^2,s)$
and $\mu:=\mu(g,1)$ is the Perelman's entropy functional.
\end{theorem}

\begin{proof}[Proof of Theorem \ref{prop}]
The proof is analogous to the argument of Proposition 2.6 in \cite{[WuWu]}, where
main trick is the Moser iteration. We need to carefully examine the explicit 
coefficients of the mean value inequality in terms of the Sobolev constant in 
\eqref{sobo}.

We first confirm the case $m=2$ of \eqref{Lm-mean}. Case $m>2$ then follows by the case $m=2$.
Indeed, we let
$v=u^m$, where $m\geq1$. Then
\[
\left(\Delta-\partial_t\right) u^m=mu^{m-1}\Delta u+m(m-1)u^{m-2}|\nabla u|^2-mu^{m-1}u_t
\ge mu^{m-1}\left(\Delta-\partial_t\right) u.
\]
This means that if $u$ is a nonnegative solution of $(\Delta-\partial_t)u\ge 0$,
then $v$ is also a nonnegative solution of $(\Delta-\partial_t)v\ge 0$.

For any nonnegative function $\phi\in C^{\infty}_0(B)$, where $B=B_p(r)$,
multiplying $(\Delta-\partial_t)u\ge 0$ by $\phi$ and integrating it over
$B$,
\[
\int_B(\phi u_t+\nabla\phi\nabla u)dv\le 0.
\]
Letting $\phi=\psi^2u$, where $\psi\in C^{\infty}_0(B)$, then
\[
\int_B(\psi^2 uu_t+\psi^2|\nabla u|^2)dv\le 2\left|\int_B u\psi \nabla u\nabla\psi dv\right|
\le 2\int_B|\nabla\psi|^2u^2dv+\frac 12\int_B\psi^2|\nabla u|^2dv,
\]
which implies that
\[
\int_B(2\psi^2 uu_t+|\nabla(\psi u)|^2)dv\le 4\,\|\nabla\psi\|^2_{\infty} \int_{\mathrm{supp}(\psi)} u^2dv.
\]

Multiplying both sides of the above inequality by a smooth time function $\lambda:=\lambda(t)$,
which will be determined later, and integrating by parts, we get
\begin{equation}\label{basinequ}
\partial_t\left(\int_B(\lambda\psi u)^2dv\right)+\lambda^2\int_B|\nabla(\psi u)|^2dv
\le C\lambda\left(\lambda\|\nabla \psi\|^2_{\infty}+|\lambda'|\sup\psi^2\right)\int_{\mathrm{supp}(\psi)} u^2dv,
\end{equation}
where $C$ is finitely constant which may change from line to line below.

Choose $\psi$ and $\lambda$ satisfying the following two properties:
\begin{enumerate}
\item
$0\leq\psi\leq 1$, $\mathrm{supp}(\psi)\subset\sigma B$, \, $\psi=1$ in $\sigma' B$ and
$|\nabla\psi|\leq \frac{2}{\kappa\,r}$;
\item
$0\leq\lambda\leq 1$, $\lambda=0$ in $(-\infty,s-\sigma r^2)$,  $\lambda=1$ in $(s-\sigma' r^2,+\infty)$, and
$|\lambda'(t)|\leq \frac{2}{\kappa^2 r^2}$,
where $0<\sigma'<\sigma<1$, $\kappa=\sigma-\sigma'$.
\end{enumerate}

Setting $I_\sigma=(s-\sigma r^2,s)$ and integrating \eqref{basinequ} over the interval
$(s-r^2, t)$ with $t\in I_{\sigma'}$,
\begin{equation}
\begin{aligned}\label{integso}
\sup_{I_{\sigma'}}\left\{\int_B\psi u^2dv\right\}+\int\int_{B\times I_{\sigma'}}|\nabla(\psi u)|^2dv dt
\leq \frac{C}{\kappa^2 r^2}\int\int_{Q_\sigma} u^2dv dt.
\end{aligned}
\end{equation}

On the other hand, using the H\"older inequality
\[
\int_{\widetilde{B}}\varphi^{2(1+\frac 2n)}dv
\le\left(\int_{\widetilde{B}}|\varphi|^{\frac{2n}{n-2}}dv\right)^{\frac{n-2}{n}}
\left(\int_{\widetilde{B}}\varphi^2dv\right)^{\frac{2}{n}}
\]
for any geodesic ball $\widetilde{B}$, and Lemma \ref{lem2}, we have that
\[
\int_{\widetilde{B}}\varphi^{2(1+\frac 2n)}dv
\le \left(\int_{\widetilde{B}}\varphi^2dv\right)^{\frac{2}{n}}
\left[C(n)e^{-\frac{2\mu}{n}}\int_{\widetilde{B}}(4|\nabla \varphi|^2+\mathrm{R}\,\varphi^2)dv\right]
\]
for any $\varphi\in C^{\infty}_0(\widetilde{B})$. Set $\varphi=u$ and $\widetilde{B}=B_p(\sigma' r)$
in above. Then we integrate it from $s-\sigma'r^2$ to $s$ with respect to
the time variable and get that
\begin{equation}
\begin{aligned}\label{space-timeineq}
&\int^s_{s-\sigma'r^2}\int_{B_p(\sigma' r)}u^{2(1+\frac 2n)}dvdt\\
&\quad\le C(n)e^{-\frac{2\mu}{n}}\left(\int_{B_p(\sigma' r)}u^2dv\right)^{\frac{2}{n}}
\left[\int^s_{s-\sigma'r^2}\int_{B_p(\sigma' r)}(4|\nabla u|^2+\mathrm{R}\,u^2)dvdt\right].
\end{aligned}
\end{equation}
In the following, we shall apply \eqref{integso} three times to carefully
estimate the right hand side of \eqref{space-timeineq}. Firstly, we observe
\begin{equation}\label{inequality1}
\int_{B_p(\sigma' r)}u^2dv\le\int_{B_p(r)}\psi u^2dv
\le\frac{C}{\kappa^2 r^2}\int\int_{Q_\sigma} u^2dv dt,
\end{equation}
where we used $\psi\equiv 1$ in $B_p(\sigma' r)$, $\sigma'<1$, in the first inequality
and used \eqref{integso} in the second inequality. Secondly, we similarly have
\begin{equation}\label{inequality2}
\int^s_{s-\sigma'r^2}\int_{B_p(\sigma' r)}|\nabla u|^2dvdt
\le\int^s_{s-\sigma'r^2}\int_{B_p(r)}|\nabla\psi u|^2dvdt
\le \frac{C}{\kappa^2 r^2}\int\int_{Q_\sigma} u^2dv dt.
\end{equation}
Thirdly, since $\mathrm{R}(x)\le \frac{c_0}{r^2(x,o)}\le\frac{c_0}{r^2}$
for $p\in \partial B_o(2r)$ and $x\in B_p(r)$, then
\begin{equation}
\begin{aligned}\label{inequality3}
\int^s_{s-\sigma'r^2}\int_{B_p(\sigma' r)}\mathrm{R}\,u^2dvdt
&\le c_0\cdot\sup_{I_{\sigma'}}\left\{\int_{B_p(\sigma' r)}u^2dv\right\}\\
&\le c_0\cdot\sup_{I_{\sigma'}}\left\{\int_{B_p(r)}\psi u^2dv\right\}\\
&\le c_0\cdot \frac{C}{\kappa^2 r^2}\int\int_{Q_\sigma} u^2dv dt,
\end{aligned}
\end{equation}
where we still used the fact that $\psi\equiv 1$ in $B_p(\sigma' r)$, $\sigma'<1$, and
\eqref{integso}.
Substituting \eqref{inequality1}, \eqref{inequality2} and \eqref{inequality3} into
\eqref{space-timeineq} yields
\begin{equation}\label{Moserinequ}
\int\int_{Q_{\sigma'}}u^{2\theta}dv dt
\leq E(B)\left(\frac{C}{\kappa^2 r^2}\int\int_{Q_\sigma} u^2dv dt\right)^\theta
\end{equation}
with $\theta=1+2/n$, where $E(B):=C(n)e^{-\frac{2\mu}{n}}(c_0+1)$.

Now for any $m\geq 1$, $u^m$ is also a nonnegative solution of
$(\Delta-\partial_t)v\ge 0$. This implies that
\begin{equation}\label{intds2}
\int\int_{Q_{\sigma'}}u^{2m\theta}dv dt
\leq E(B)\left(\frac{C}{\kappa^2 r^2}\int\int_{Q_\sigma} u^{2m}dv dt\right)^\theta
\end{equation}
for $m\geq1$.

\vspace{.1in}

Let $\kappa_i=(1-\delta)2^{-i}$, which satisfies $\Sigma^{\infty}_1\kappa_i=1-\delta$.
Let $\sigma_0=1$, $\sigma_{i+1}=\sigma_i-\kappa_i=1-\Sigma^i_1\kappa_j$. Applying \eqref{intds2}
for $m=\theta^i$, $\sigma=\sigma_i$, $\sigma'=\sigma_{i+1}$, we have
\[
\int\int_{Q_{\sigma_{i+1}}}u^{2\theta^{i+1}}dv dt
\leq E(B)\left\{C^{i+1}\left[\Big(1-\delta\Big)r\right]^{-2}\int\int_{Q_{\sigma_i}} u^{2\theta^i}dv dt\right\}^\theta.
\]
Therefore
\[
\left(\int\int_{Q_{\sigma_{i+1}}}u^{2\theta^{i+1}}dv dt\right)^{\theta^{-i-1}}
\le C^{\Sigma j\theta^{1-j}}\cdot E(B)^{\Sigma\theta^{-j}}\cdot\left[\Big(1-\delta\Big)r\right]^{-2\Sigma\theta^{1-j}}\int\int_Q u^2dv dt,
\]
where $\Sigma$ denotes the summations from $1$ to $i+1$. Letting $i\to \infty$,
\begin{equation}\label{prmi}
\sup_{Q_\delta}\{u^2\}\leq C\cdot E(B)^{\frac n2}\cdot[(1-\delta)r]^{-2-n}\|u\|^2_{2,Q}.
\end{equation}
This is regarded as a $L^2$-mean value type inequality on a complete shrinker $(M,g, f)$.
By the preceding explanation, we hence prove \eqref{Lm-mean} when $m\geq 2$.

When $0<m<2$, we are still able to obtain \eqref{Lm-mean}. It is proved from \eqref{prmi}
by a different iterative argument. Letting $\sigma\in (0,1)$ and $\rho=\sigma+(1-\sigma)/4$,
then \eqref{prmi} indeed implies
\[
\sup_{Q_\sigma}\{u\}\leq F(B)\cdot(1-\sigma)^{-1-\frac n2}\|u\|_{2,Q_{\rho}},
\]
where $F(B):=C(n)e^{-\frac \mu2}(c_0+1)^{\frac n4}r^{-1-\frac n2}$.
Using inequality $\|u\|_{2,Q}\leq \|u\|^{1-\frac m2}_{\infty,Q}\cdot\|u\|^{\frac m2}_{m,Q}$,
$0<m<2$, for any $Q$, we obtain
\begin{equation}\label{moserit}
\|u\|_{\infty,Q_\sigma} \leq F(B)\|u\|^{\frac m2}_{m,Q}\cdot(1-\sigma)^{-1-\frac n2}\|u\|^{1-\frac m2}_{\infty,Q_{\rho}}.
\end{equation}

Now fix $\delta\in (0,1)$ and let $\sigma_0=\delta$, $\sigma_{i+1}=\sigma_i+(1-\sigma_i)/4$,
which satisfy $1-\sigma_i=(3/4)^i(1-\delta)$.
Applying \eqref{moserit} to $\sigma=\sigma_i$ and $\rho=\sigma_{i+1}$ for each $i$,
\[
\|u\|_{\infty,Q_{\sigma_i}} \leq \left(\tfrac 43\right)^{(1+\frac n2)i}
G(B)\cdot(1-\delta)^{-1-\frac n2}\,\|u\|^{1-\frac m2}_{\infty,Q_{\sigma_{i+1}}},
\]
where $G(B):=F(B)\cdot\|u\|^{\frac m2}_{m,Q}$. Therefore, for any $i$,
\[
\|u\|_{\infty,Q_{\delta}} \leq \left(\tfrac 43\right)^{(1+\frac n2)\Sigma j(1-\frac m2)^j}
\left[G(B)\cdot(1-\delta)^{-1-\frac n2}\right]^{\Sigma(1-\frac m2)^j}
\|u\|^{(1-\frac m2)^i}_{\infty,Q_{\sigma_i}},
\]
where $\Sigma$ denotes the summations from $0$ to $i-1$. Letting $i\to \infty$,
\[
\|u\|_{\infty,Q_{\delta}}\le\left(\tfrac 43\right)^{\frac{2-m}{m^2}(2+n)}
\left[G(B)\cdot(1-\delta)^{-1-\frac n2}\right]^{\frac 2m},
\]
that is,
\[
\|u\|_{\infty,Q_{\delta}}\le C(n,m)(1-\delta)^{-\frac{2+n}{m}} e^{-\frac{\mu}{m}}(c_0+1)^{\frac{n}{2m}}r^{-\frac{2+n}{m}}\|u\|_{m,Q}.
\]
Hence we get \eqref{Lm-mean} when $0<m<2$ .
\end{proof}

Theorem \ref{prop} gives a local $L^1$-mean
value inequality for harmonic functions on shrinkers.

\begin{corollary}\label{corelli}
Let $(M,g, f)$ be an $n$-dimensional shrinker satisfying \eqref{Eq1}
and \eqref{Eq2}. Assume there exists a nonnegative constant $c_0$ such that
\[
\mathrm{R}(x)\cdot r^2(x,o)\le c_0.
\]
For any nonnegative subharmonic function $u$ on $M$,
\[
\sup_{x\in B_p(\frac r2)}u(x)\le C(n)\frac{(c_0+1)^{\frac n2}}{e^{\mu}\,r^n}\int_{B_p(r)}u(x)dv(x)
\]
for any point $p\in \partial B_o(2r)$, where $\mu:=\mu(g,1)$ is the Perelman's entropy functional.
\end{corollary}

The scalar curvature condition in Theorem \ref{prop} is unnecessary if one considers
the Schr\"odinger heat equation. Indeed we can apply Lemma \ref{lem2} to get a
local mean value inequality for the Schr\"odinger heat equation on shrinkers without
any assumption.
\begin{theorem}\label{meaninequ2}
Let $(M,g, f)$ be an $n$-dimensional shrinker
satisfying \eqref{Eq1} and \eqref{Eq2}. Fix $0<m<\infty$. Then there exists a
positive constant $C(n,m)$ depending on $n$ and $m$, such that, for any
$s\in \mathbb{R}$, $0<\delta<1$, and for any nonnegative smooth solution $u$ of
\[
\left(\Delta-a\mathrm{R}-\partial_t\right)u(x,t)\ge0,\quad a>0,
\]
in the space-time cylinder $Q:=B_p(r)\times(s-r^2,s)$, where $p\in M$ and $r>0$, we have
\begin{equation}\label{Lm-meanin2}
\sup_{Q_\delta}\{u^m\}
\leq \frac{C(n,m)(a^{-1}+1)^{\frac n2}}{(1-\delta)^{2+n}\,e^{\mu}\,r^{2+n}}\int_Qu^m\,\, dv\, dt,
\end{equation}
where $Q_\delta:=B_p(\delta r)\times (s-\delta r^2,s)$
and $\mu:=\mu(g,1)$ is the Perelman's entropy functional.
\end{theorem}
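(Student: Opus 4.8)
\emph{Proof sketch.} The plan is to run the Moser iteration of Theorem~\ref{prop} essentially verbatim, the one decisive change being that the zeroth-order term $a\mathrm{R}\,u$ in the Schr\"odinger heat operator is a \emph{help} rather than an obstruction: when one tests the differential inequality against $\psi^2u$, it contributes the term $a\int\mathrm{R}\,\psi^2u^2\,dv=a\int\mathrm{R}(\psi u)^2\,dv$ with a favorable sign, and this is exactly (with no cross terms) the scalar-curvature term appearing on the right of the Li--Wang Sobolev inequality of Lemma~\ref{lem2} applied to $\psi u$. Absorbing one into the other is what makes the curvature hypothesis of Theorem~\ref{prop} unnecessary, at the cost of a factor $a^{-1}$; this is why $(c_0+1)^{n/2}$ is replaced by $(a^{-1}+1)^{n/2}$ and why one may take any $r>0$. (Equivalently, one could apply Lemma~\ref{lem2} directly to $\psi u$ and bypass the passage to a subball, but I keep the structure of the proof of Theorem~\ref{prop} so as to reuse \eqref{integso}--\eqref{Moserinequ}.)

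First I would reduce to $m=2$. For $m\ge1$,
\[
(\Delta-a\mathrm{R}-\partial_t)u^m=mu^{m-1}(\Delta-\partial_t)u+m(m-1)u^{m-2}|\nabla u|^2-a\mathrm{R}\,u^m\ge(m-1)a\mathrm{R}\,u^m\ge0,
\]
using $\Delta u-\partial_tu\ge a\mathrm{R}\,u$, $u\ge0$, and the nonnegativity of the scalar curvature on shrinkers; hence $v=u^m$ is again a nonnegative subsolution of the same Schr\"odinger heat inequality, so the case $m>2$ reduces to $m=2$, and $0<m<2$ is treated at the end. For the energy estimate, multiplying $(\Delta-a\mathrm{R}-\partial_t)u\ge0$ by $\psi^2u$ with $\psi\in C_0^\infty(B)$, $B=B_p(r)$, and integrating by parts, one obtains, beyond the terms already present in the corresponding step of the proof of Theorem~\ref{prop},
\[
\tfrac14\!\int_B|\nabla(\psi u)|^2dv+a\!\int_B\mathrm{R}(\psi u)^2dv+\!\int_B\psi^2uu_t\,dv\le C\|\nabla\psi\|^2_{\infty}\!\int_{\mathrm{supp}(\psi)}\!u^2dv.
\]
Multiplying by a time cutoff $\lambda(t)$, integrating by parts in $t$, choosing $\psi,\lambda$ with the scalings of items (1)--(2) in the proof of Theorem~\ref{prop}, and integrating in time then yields the analogue of \eqref{integso},
\[
\sup_{I_{\sigma'}}\Big\{\int_B\psi u^2dv\Big\}+\iint_{B\times I_{\sigma'}}\!\!|\nabla(\psi u)|^2dvdt+a\iint_{B\times I_{\sigma'}}\!\!\mathrm{R}(\psi u)^2dvdt\le\frac{C}{\kappa^2r^2}\iint_{Q_\sigma}u^2dvdt.
\]

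The crux is the estimate of the right side of the space-time Sobolev--H\"older inequality \eqref{space-timeineq}. The bounds \eqref{inequality1} and \eqref{inequality2} go through unchanged; in place of the curvature-dependent estimate \eqref{inequality3} I would use, since $\psi\equiv1$ on $B_p(\sigma'r)$ together with the displayed inequality above,
\[
\iint_{Q_{\sigma'}}\mathrm{R}\,u^2\,dvdt=\iint_{Q_{\sigma'}}\mathrm{R}(\psi u)^2\,dvdt\le\frac1a\cdot\frac{C}{\kappa^2r^2}\iint_{Q_\sigma}u^2\,dvdt.
\]
Feeding the three bounds into \eqref{space-timeineq} produces \eqref{Moserinequ} with $E(B)=C(n)e^{-2\mu/n}(a^{-1}+1)$ replacing $C(n)e^{-2\mu/n}(c_0+1)$. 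From here the iteration over the exponents $\theta^i$ and radii $\sigma_i$ is word for word as in Theorem~\ref{prop}, with $E(B)$ entering the limit to the power $\sum_{j\ge1}\theta^{-j}=n/2$, so that $E(B)^{n/2}=C(n)e^{-\mu}(a^{-1}+1)^{n/2}$ and one recovers the $L^2$ mean value inequality \eqref{prmi}; the passage from $m=2$ to arbitrary $0<m<\infty$ is then the same two-step argument based on \eqref{moserit}. I do not expect a genuine obstacle here: the only points needing care are keeping every $\psi$-support inside $B_p(r)$ so that Lemma~\ref{lem2} applies for $r>0$ arbitrary, and tracking the factor $a^{-1}$ through the iteration so that it emerges correctly as $(a^{-1}+1)^{n/2}$ in \eqref{Lm-meanin2}.
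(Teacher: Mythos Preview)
Your proposal is correct and follows essentially the same approach as the paper's proof: test the Schr\"odinger heat inequality against $\psi^2u$ so that the $a\mathrm{R}$ term appears with a favorable sign in the energy estimate (the paper's \eqref{integso2}), then absorb the $\int\mathrm{R}\,u^2$ term in the Sobolev--H\"older inequality \eqref{space-timeineq2} via this energy bound at the cost of a factor $a^{-1}$, obtaining $E(B)=C(n)e^{-2\mu/n}(a^{-1}+1)$, after which the Moser iteration and the $0<m<2$ step are identical to those of Theorem~\ref{prop}. Your computation $(\Delta-a\mathrm{R}-\partial_t)u^m\ge(m-1)a\mathrm{R}\,u^m\ge0$ is just a rearrangement of the paper's $(\Delta-a\mathrm{R}-\partial_t)u^m\ge mu^{m-1}(\Delta-a\mathrm{R}-\partial_t)u$.
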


\begin{proof}[Proof of Theorem \ref{meaninequ2}]
We only check the case $m=2$. The other cases are similar to the proof of Theorem \ref{prop}.
For $m\geq1$,
\begin{equation*}
\begin{aligned}
\left(\Delta-a\mathrm{R}-\partial_t\right) u^m&=mu^{m-1}\left(\Delta-a\mathrm{R}-\partial_t\right)u
+m(m-1)u^{m-2}|\nabla u|^2+(m-1)a\mathrm{R}u^m\\
&\ge mu^{m-1}\left(\Delta-a\mathrm{R}-\partial_t\right)u,
\end{aligned}
\end{equation*}
where we used $\mathrm{R}\ge 0$. This implies that if $u$ is a nonnegative
solution of $(\Delta-a\mathrm{R}-\partial_t)u\ge 0$, then $v:=u^m$ is also a
nonnegative solution of $(\Delta-a\mathrm{R}-\partial_t)v\ge 0$.

For any nonnegative function $\phi\in C^{\infty}_0(B)\times\mathbb{R}$,
where $B=B_p(r)$, multiplying $(\Delta-a\mathrm{R}-\partial_t)u\ge 0$
by $\phi$ and integrating it over $B$,
\[
\int_B(\phi u_t+\nabla\phi\nabla u+a\mathrm{R}\phi u)dv\le 0.
\]
Let $\phi=\psi^2u$, where $\psi\in C^{\infty}_0(B)$. Then
\begin{equation*}
\begin{aligned}
\int_B(\psi^2 uu_t+\psi^2|\nabla u|^2+a\mathrm{R}\psi^2u^2)dv
&\le 2\left|\int_B u\psi \nabla u\nabla\psi dv\right|\\
&\le 2\int_B|\nabla\psi|^2u^2dv+\frac 12\int_B\psi^2|\nabla u|^2dv,
\end{aligned}
\end{equation*}
which implies
\[
\int_B(2\psi^2 uu_t+|\nabla(\psi u)|^2+2a\mathrm{R}\psi^2u^2)dv\le 4\,\|\nabla\psi\|^2_{\infty} \int_{\mathrm{supp}(\psi)} u^2dv.
\]
Multiplying both sides of the above inequality by a time function $\lambda:=\lambda(t)$,
which will be determined later, and integrating by parts, we get
\begin{equation*}
\begin{aligned}
\partial_t\left(\int_B(\lambda\psi u)^2dv\right)+&\lambda^2\int_B\left(|\nabla(\psi u)|^2+2a\mathrm{R}\psi^2u^2\right)dv\\
&\leq C\lambda\Big(\lambda\|\nabla \psi\|^2_{\infty}+|\lambda'|\sup\psi^2\Big)\int_{\mathrm{supp}(\psi)} u^2dv.
\end{aligned}
\end{equation*}
Following the proof of Theorem \ref{prop}, we choose $\psi$ and $\lambda$ satisfying two properties:
\begin{enumerate}
\item
$0\leq\psi\leq 1$, $\mathrm{supp}(\psi)\subset\sigma B$, \, $\psi=1$ in $\sigma' B$ and
$|\nabla\psi|\leq \frac{2}{\kappa\,r}$;
\item
$0\leq\lambda\leq 1$, $\lambda=0$ in $(-\infty,s-\sigma r^2)$,  $\lambda=1$ in $(s-\sigma' r^2,+\infty)$, and
$|\lambda'(t)|\leq \frac{2}{\kappa^2 r^2}$,
where $0<\sigma'<\sigma<1$, $\kappa=\sigma-\sigma'$.
\end{enumerate}

Set $I_\sigma=(s-\sigma r^2,s)$. Integrating the above inequality over interval
$(s-r^2, t)$ with $t\in I_{\sigma'}$,
\begin{equation}
\begin{aligned}\label{integso2}
\sup_{I_{\sigma'}}\left\{\int_B\psi u^2dv\right\}+\int\int_{B\times I_{\sigma'}}\left(|\nabla(\psi u)|^2+2a\mathrm{R}\psi^2u^2\right)dvdt
\leq \frac{C}{\kappa^2 r^2}\int\int_{Q_\sigma}u^2dv dt.
\end{aligned}
\end{equation}

On the other hand, by the H\"older inequality and Lemma \ref{lem2},
for any $\varphi\in C^{\infty}_0(\widetilde{B})$,
where $\widetilde{B}\subset M$, we have
\[
\int_{\widetilde{B}}\varphi^{2(1+\frac 2n)}dv
\le \left(\int_{\widetilde{B}}\varphi^2dv\right)^{\frac{2}{n}}
\left[C(n)e^{-\frac{2\mu}{n}}\int_{\widetilde{B}}(4|\nabla \varphi|^2+\mathrm{R}\,\varphi^2)dv\right].
\]
Let $\varphi=u$ and $\widetilde{B}=B_p(\sigma' r)$
in above. Then we integrate it from $s-\sigma'r^2$ to $s$ with respect to
the time variable and get that
\begin{equation}
\begin{aligned}\label{space-timeineq2}
&\int^s_{s-\sigma'r^2}\int_{B_p(\sigma' r)}u^{2(1+\frac 2n)}dvdt\\
&\quad\le C(n)e^{-\frac{2\mu}{n}}\left(\int_{B_p(\sigma' r)}u^2dv\right)^{\frac{2}{n}}
\left[\int^s_{s-\sigma'r^2}\int_{B_p(\sigma' r)}(4|\nabla u|^2+\mathrm{R}\,u^2)dvdt\right].
\end{aligned}
\end{equation}
Here we will estimate each term of the right hand side of \eqref{space-timeineq2}
by using \eqref{integso2}. First, \eqref{integso2} implies
\[
\int_{B_p(\sigma' r)}u^2dv\le\int_{B_p(r)}\psi u^2dv
\le \frac{C}{\kappa^2 r^2}\int\int_{Q_\sigma} u^2dv dt.
\]
Second, we apply \eqref{integso2} to get
\[
\int^s_{s-\sigma'r^2}\int_{B_p(\sigma' r)}|\nabla u|^2dvdt
\le\int^s_{s-\sigma'r^2}\int_{B_p(r)}|\nabla\psi u|^2dvdt
\le \frac{C}{\kappa^2 r^2}\int\int_{Q_\sigma} u^2dv dt.
\]
Thirdly, we apply \eqref{integso2} to estimate that
\[
\int^s_{s-\sigma'r^2}\int_{B_p(\sigma' r)}\mathrm{R}\,u^2dvdt
\le\int^s_{s-\sigma'r^2}\int_{B_p(r)}\mathrm{R}\,(\psi u)^2dvdt
\le \frac{Ca^{-1}}{\kappa^2 r^2}\int\int_{Q_\sigma} u^2dv dt.
\]
Substituting the above three inequalities into \eqref{space-timeineq2} gives
\[
\int\int_{Q_{\sigma'}}u^{2(1+2/n)}dv dt
\leq C(n)e^{-\frac{2\mu}{n}}(a^{-1}+1)\left(\frac{C}{\kappa^2 r^2}\int\int_{Q_\sigma} u^2dv dt\right)^{1+2/n}.
\]
This inequality is nearly the same as \eqref{Moserinequ}, appeared
in the proof of Theorem \ref{prop}. Following the preceding argument, we apply the
same Moser's iteration to get \eqref{Lm-meanin2}.
\end{proof}

Theorem \ref{meaninequ2} gives a local $L^1$-mean value inequality
for a Schr\"odinger equation on shrinkers without any condition,
which is prepared for proving Theorem \ref{mainpoten}.

\begin{corollary}\label{corelli2}
Let $(M,g, f)$ be an $n$-dimensional shrinker satisfying \eqref{Eq1} and \eqref{Eq2}.
For any nonnegative solution $u$ of
\[
\left(\Delta-a\mathrm{R}\right)u(x)\ge0,\quad a>0,
\]
on $M$, we have
\[
\sup_{x\in B_p(\frac r2)}u(x)\le C(n)\frac{(a^{-1}+1)^{\frac n2}}{e^{\mu}\,r^n}\int_{B_p(r)}u(x)dv(x)
\]
for any point $p\in M$ and $r>0$, where $\mu:=\mu(g,1)$ is the Perelman's entropy functional.
\end{corollary}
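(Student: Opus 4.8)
The plan is to deduce Corollary \ref{corelli2} directly from the parabolic mean value inequality of Theorem \ref{meaninequ2}, by regarding a static function as a time-independent solution of the Schr\"odinger heat inequality. Concretely, suppose $u$ is a nonnegative solution of $(\Delta-a\mathrm{R})u\ge0$ on $M$, and fix $p\in M$ and $r>0$. Define $\widetilde u(x,t):=u(x)$ on the space-time cylinder $Q:=B_p(r)\times(-r^2,0)$. Since $\partial_t\widetilde u\equiv0$, we have $(\Delta-a\mathrm{R}-\partial_t)\widetilde u=(\Delta-a\mathrm{R})u\ge0$, so $\widetilde u$ is a smooth nonnegative solution of the Schr\"odinger heat inequality on $Q$, and Theorem \ref{meaninequ2} applies with no curvature hypothesis.

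Next I would apply Theorem \ref{meaninequ2} with $m=1$, $s=0$, and $\delta=\tfrac12$, which yields
\[
\sup_{Q_{1/2}}\widetilde u\le\frac{C(n,1)(a^{-1}+1)^{n/2}}{(1-\tfrac12)^{2+n}\,e^{\mu}\,r^{2+n}}\int_Q\widetilde u\,dv\,dt,
\]
where $Q_{1/2}=B_p(\tfrac r2)\times(-\tfrac{r^2}{2},0)$. Because $\widetilde u$ is independent of $t$, the left-hand side equals $\sup_{B_p(r/2)}u$, while the right-hand integral factors as
\[
\int_Q\widetilde u\,dv\,dt=\int_{-r^2}^{0}\!\!\left(\int_{B_p(r)}u\,dv\right)dt=r^2\int_{B_p(r)}u\,dv .
\]
Absorbing the harmless numerical factors $2^{2+n}$ and $C(n,1)$ into a new constant $C(n)$, and using $r^2/r^{2+n}=r^{-n}$, I obtain exactly
\[
\sup_{x\in B_p(r/2)}u(x)\le C(n)\frac{(a^{-1}+1)^{n/2}}{e^{\mu}\,r^n}\int_{B_p(r)}u\,dv ,
\]
as claimed, for every $p\in M$ and every $r>0$.

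There is essentially no obstacle in this argument; the only thing to verify is that Theorem \ref{meaninequ2} imposes no restriction on the radius beyond $r>0$ (in contrast to Theorem \ref{prop}, whose scalar-curvature decay hypothesis forced $r>2r(p,o)$), which is indeed the case since the Schr\"odinger version discards that curvature assumption. If a self-contained route were preferred, one could instead run the elliptic Moser iteration directly on Lemma \ref{lem2}, exploiting $(m-1)a\mathrm{R}u^m\ge0$ together with $\mathrm{R}\ge0$ on shrinkers to control the potential term exactly as in the proof of Theorem \ref{meaninequ2}; but the reduction above is shorter and reuses work already done, so I would present it that way.
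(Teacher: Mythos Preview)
Your proposal is correct and matches the paper's approach: the paper presents Corollary~\ref{corelli2} as an immediate consequence of Theorem~\ref{meaninequ2} without giving a separate proof, and the standard reduction you describe---treating $u$ as a time-independent subsolution and applying the parabolic estimate with $m=1$, $\delta=\tfrac12$---is exactly what is intended. Your observation that no lower bound on $r$ is needed here (in contrast to Theorem~\ref{prop}) is also correct and worth noting.
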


\section{Proof of Theorem \ref{main1}}\label{sec4}

In this section we will follow the argument of \cite{[Li]} to prove Theorem \ref{main1}.
We first recall an important lemma due to Li \cite{[Li]}.

\begin{lemma}[Li \cite{[Li]}]\label{lem1}
Let $(M,g)$ be an $n$-dimensional manifold satisfying $V_p(r)\le c(n)r^\tau$
for some constant $\tau\ge 0$ at a base point $p\in M$ and let $K$
be a nonzero $k$-dimensional ($k<\infty$) subspace of $\mathcal{H}_d(M)$.
For $\beta>1$, $\delta>0$ and $r_0>0$, there exists $r_1>r_0$ such that
\[
\sum^k_{i=1}\int_{B_p(r_1)}u_i^2\, dv\ge k\cdot\beta^{-(2d+\tau+\delta)},
\]
where $\{u_i\}_{i=1}^k$ is an orthonormal basis of $K$ with respect to the
inner product $A_{\beta r_1}(u,v)=\int_{B_p(\beta r_1)} uv\, dv$.
\end{lemma}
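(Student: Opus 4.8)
The plan is to track the Gram determinant of a \emph{fixed} basis of $K$ and exploit its polynomial growth. Fix any basis $\{v_1,\dots,v_k\}$ of $K$ and set $F(r):=\det\big(A_r(v_i,v_j)\big)$. For $r_2>r_1$ the matrix $\big(A_{r_2}(v_i,v_j)\big)-\big(A_{r_1}(v_i,v_j)\big)=\big(\int_{B_p(r_2)\setminus B_p(r_1)}v_iv_j\,dv\big)$ is a Gram matrix, hence positive semidefinite, so $F$ is nondecreasing; and since the $v_i$ are harmonic and linearly independent, unique continuation shows that their restrictions to any ball stay linearly independent, hence $F(r)>0$ for every $r>0$. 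Finally, each $v_i\in\mathcal{H}_d(M)$ satisfies $\sup_{B_p(r)}|v_i|\le C(v_i)(1+r)^d$ for large $r$, while $V_p(r)\le Cr^\tau$, so $A_r(v_i,v_i)\le\sup_{B_p(r)}v_i^2\cdot V_p(r)\le C_i r^{2d+\tau}$; Hadamard's inequality then gives
\[
F(r)\le\prod_{i=1}^k A_r(v_i,v_i)\le C''\,r^{k(2d+\tau)}\qquad\text{for all }r\ge r_1,
\]
and, enlarging $r_0$ if necessary (which only strengthens the conclusion), we may assume $r_0\ge r_1$.

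Next I relate the statement to $F$. Let $\{u_i\}$ be an $A_{\beta r}$-orthonormal basis of $K$ and write $u_i=\sum_j T_{ij}v_j$; then $1=\det\big(A_{\beta r}(u_i,u_j)\big)=(\det T)^2F(\beta r)$, whereas $\det\big(A_r(u_i,u_j)\big)=(\det T)^2F(r)=F(r)/F(\beta r)$. The matrix $\big(A_r(u_i,u_j)\big)$ is symmetric positive definite with trace $\sum_i\int_{B_p(r)}u_i^2\,dv$, so the arithmetic--geometric mean inequality for its eigenvalues yields
\[
\sum_{i=1}^k\int_{B_p(r)}u_i^2\,dv\ \ge\ k\,\Big(\det\big(A_r(u_i,u_j)\big)\Big)^{1/k}=k\,\Big(\frac{F(r)}{F(\beta r)}\Big)^{1/k}.
\]
Hence it is enough to exhibit some $r>r_0$ with $F(\beta r)\le\beta^{k(2d+\tau+\delta)}F(r)$.

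I would obtain this by contradiction. If $F(\beta r)>\beta^{k(2d+\tau+\delta)}F(r)$ for \emph{every} $r>r_0$, then applying this along $r=\beta^{j}r_0$ ($j\ge1$) and multiplying gives $F(\beta^{m}r_0)>\beta^{(m-1)k(2d+\tau+\delta)}F(r_0)$ for all $m$; combined with the polynomial bound $F(\beta^{m}r_0)\le C''(\beta^{m}r_0)^{k(2d+\tau)}$, this forces $\beta^{(m-1)k\delta}\le C'''$ for all $m$ and some fixed $C'''$, which is impossible because $\beta>1$ and $\delta>0$. Therefore there is $r>r_0$ with $F(\beta r)\le\beta^{k(2d+\tau+\delta)}F(r)$, and substituting into the displayed inequality gives $\sum_{i=1}^k\int_{B_p(r)}u_i^2\,dv\ge k\,\beta^{-(2d+\tau+\delta)}$, as claimed.

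As the excerpt notes, the whole argument is elementary matrix theory together with the two growth hypotheses, so there is no serious obstacle. The only points needing a little care are the nondegeneracy $F(r)>0$ (secured by linear independence of the restrictions $v_i|_{B_p(r)}$, i.e.\ unique continuation for harmonic functions) and the bookkeeping behind $F(r)\le C''r^{k(2d+\tau)}$, whose constant has to absorb the basis-dependent growth constants $C(v_i)$ --- which is harmless, since the basis $\{v_1,\dots,v_k\}$ is fixed once and for all.
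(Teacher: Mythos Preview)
The paper does not actually prove this lemma; it merely cites Li's original paper and remarks that the proof ``only involves some basic propositions about matrices.'' Your argument is correct and is precisely the standard proof: track the Gram determinant $F(r)=\det\big(A_r(v_i,v_j)\big)$ of a fixed basis, use Hadamard's inequality together with the growth hypotheses to bound $F(r)\le C\,r^{k(2d+\tau)}$, relate the trace of $\big(A_r(u_i,u_j)\big)$ to the ratio $F(r)/F(\beta r)$ via the AM--GM inequality on eigenvalues, and derive a contradiction by iterating along the geometric sequence $\beta^j r_0$. The minor technical points you flag (positivity of $F(r)$ via unique continuation, and harmlessly enlarging $r_0$ past the threshold where the polynomial bounds kick in) are handled correctly.
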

\begin{remark}\label{schlowin}
The lemma is still true if the linear space $\mathcal{H}_d(M)$
is replaced by $\mathcal{H}_d(a, M)$.
\end{remark}

Using Li's proof strategy \cite{[Li]}, we give a explicit estimate
for the dimension of $\mathcal{H}_d(M)$ by combining Corollary
\ref{corelli} and Lemma \ref{lem1}.

\begin{proof}[Proof of Theorem \ref{main1}]
Let $K$ be any nonzero $k$-dimensional
linear subspace $K\subset \mathcal{H}_d(M)$ with a infimum point $p_0\in M$
of $f$. Since $\mathrm{dim}K$ and $\mathrm{dim}\mathcal{H}_d(M)$ both do not
depend on a base point, without loss of generality, we may choose a infimum
$p_0$ as their base point. To prove the theorem, it suffices to estimate $k$.
By Proposition \ref{volesti2}, $V_{p_0}(r)\le C(n)r^n$ for all $r>0$. Using
this, by Lemma \ref{lem1}, for any $\delta>0$, there exists $r_1>8r(o,p_0)$
such that
\begin{equation}\label{esti}
\sum^k_{i=1}\int_{B_{p_0}(r_1)}u_i^2(x)dv(x)\,\ge k \cdot2^{-(2d+n+\delta)},
\end{equation}
where $\{u_i\}^k_{i=1}$ is an orthonormal basis of $K$ with respect to the
inner product $A_{2r_1}(u,v)=\int_{B_{p_0}(2r_1)} uv\, dv$.

We shall apply \eqref{esti} to estimate $k$ when $d\ge0$. Since $\sum^k_{i=1}u^2_i(x)$
is subharmonic, by the maximum principle, there exists a point
$q\in\partial B_{p_0}(r_1)$ such that $\sum^k_{i=1}u^2_i(x)\le\sum^k_{i=1}u^2_i(q)$
for all $x\in B_{p_0}(r_1)$. Now we can find a $k\times k$ orthogonal matrix
$(a_{ij})$ such that functions $v_i(x)=\sum^k_{j=1}a_{ij}u_j$ satisfy
$v^2_1(q)=\sum^k_{i=1}u^2_i(q)$ and $v_j(q)=0$ for $2\le j\le k$. So
$\sum^k_{i=1}u^2_i(x)\le v^2_1(q)$. Integrating this over the ball
$B_{p_0}(r_1)$,
\begin{equation}\label{jifen}
\sum^k_{i=1}\int_{B_{p_0}(r_1)}u^2_i(x)\,dv(x)\le V_{p_0}(r_1)v^2_1(q).
\end{equation}
On the other hand, since $\mathrm{R}(x)\le \frac{c_0}{r^2(x,o)}$, and
$r(x,o)\ge r(p_0,x)-r(o,p_0)\ge\tfrac{1}{2}r(x,p_0)$ when
$r(p_0,x)\ge 2r(o,p_0)$, then we have a fact that
\[
\mathrm{R}(x)\le \frac{4c_0}{r^2(x,p_0)}\quad\mathrm{whenever}\quad r(p_0,x)\ge2r(o,p_0).
\]
Consider the nonnegative subharmonic function $v^2_1(x)$ in $B_q(r_1/2)$
with $\mathrm{R}(x)\le \frac{4c_0}{r^2(x,p_0)}$. Using Corollary
\ref{corelli}, we immediately have
\[
v^2_1(q)\le C(n)\frac{(c_0+1)^{\frac n2}}{e^{\mu}\,{r_1}^n}\int_{B_q(r_1/4)}v^2_1(y)dv(y).
\]
Using $B_q(r_1/4)\subset B_{p_0}(2r_1)$, we further have
\begin{equation*}
\begin{aligned}
V_{p_0}(r_1)v^2_1(q)&\le V_{p_0}(r_1)C(n)\frac{(c_0+1)^{\frac n2}}{e^{\mu}\,{r_1}^n}\int_{B_{p_0}(2r_1)}v^2_1(y)dv(y)\\
&\le C(n)(c_0+1)^{\frac n2}e^{-\mu}\int_{B_{p_0}(2r_1)}v^2_1(y)dv(y),
\end{aligned}
\end{equation*}
where we used $V_{p_0}(r_1)\le C(n)r^n_1$. Combining this with \eqref{esti} 
and \eqref{jifen}, we have
\[
k \cdot 2^{-(2d+n+\delta)}\le C(n)(c_0+1)^{\frac n2}e^{-\mu},
\]
since $\int_{B_{p_0}(2r_1)}v^2_1(y)dv(y)=1$.
Noticing that $\delta>0$ is arbitrary, hence
\[
k \le C(n)(c_0+1)^{\frac n2}e^{-\mu}\,4^d.
\]
Since the subspace $K$ is arbitrary, Theorem \ref{main1} follows.
\end{proof}

\section{Proof of Theorem \ref{mainpoten}}\label{sec5}
In this section, we will prove Theorem \ref{mainpoten} by using the
similar argument of Section \ref{sec4}. Since the second estimate of
Theorem \ref{mainpoten} is sharper, we also need the following important
lemma, whose novelty part is that there are no curvature assumption.

\begin{lemma}\label{parallemm2}
Let $(M,g, f)$ be an $n$-dimensional complete non-compact shrinker satisfying
\eqref{Eq1} and \eqref{Eq2} with a infimum point $p_0\in M$ of $f$. Let
$\{u_i\}_{i=1}^k$ be any basis of a nonzero $k$-dimensional subspace
$K\subset\mathcal{H}_d(a, M)$. For any $p\in M$, $r>>r(p,p_0)$ and any
$0<\epsilon<\tfrac 12$,
\[
\sum^k_{i=1}\int_{B_p(r)}u^2_i dv\le C(n)(a^{-1}+1)^{\frac n2}e^{-\mu}\,\epsilon^{1-n}\sup_{u\in\{\langle A,U\rangle\}}\int_{B_p((1+\epsilon)r)}u^2 dv,
\]
where $\langle A,U\rangle:=\{\sum_ia_iu_i{|}\sum_ia^2_i=1\}$ for some
unit vector $A=(a_1, \ldots, a_k)\in \mathbb{R}^k$ with $U=(u_1, \ldots, u_k)$,
and $\mu:=\mu(g,1)$ is the Perelman's entropy functional.
\end{lemma}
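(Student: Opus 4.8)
\textbf{Proof proposal for Lemma \ref{parallemm2}.}
The plan is to mirror the proof of Lemma \ref{lemm2}, replacing the use of Corollary \ref{corelli} (which carried the scalar-curvature decay hypothesis) by Corollary \ref{corelli2}, which needs no curvature assumption. First I would fix $x\in B_p(r)$ and set $K_x=\{u\in K\mid u(x)=0\}$; this is a subspace of codimension at most one in $K$, so by an orthonormal change of basis we may arrange $u_i\in K_x$ for $2\le i\le k$ and $\sum_{i=1}^k u_i^2(x)=u_1^2(x)$. The key point is that if $u\in\mathcal{H}_d(a,M)$, i.e. $(\Delta-a\mathrm{R})u=0$, then $v:=u^2$ satisfies $(\Delta-a\mathrm{R})v=2u\,\Delta u+2|\nabla u|^2-a\mathrm{R}u^2=2|\nabla u|^2+a\mathrm{R}u^2\ge0$, using $\mathrm{R}\ge0$ on a shrinker. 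Hence $u_1^2$ is a nonnegative subsolution of the Schr\"odinger heat equation (time-independent), and Corollary \ref{corelli2} applies on a ball centered at $x$.

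Next I would apply Corollary \ref{corelli2} to $u_1^2$ on the ball $B_x\big((1+\epsilon)r-\rho(x)\big)$, where $\rho(x)=r(p,x)$, obtaining
\[
\sum_{i=1}^k u_i^2(x)=u_1^2(x)\le \frac{C(n)(a^{-1}+1)^{\frac n2}}{e^{\mu}\,\big[(1+\epsilon)r-\rho(x)\big]^n}\int_{B_x((1+\epsilon)r-\rho(x))}u_1^2(y)\,dv(y).
\]
Since $B_x\big((1+\epsilon)r-\rho(x)\big)\subset B_p\big((1+\epsilon)r\big)$ and the integrand, after normalizing $A=(a_1,\dots,a_k)$ so that $u_1=\sum_i a_iu_i\in\langle A,U\rangle$, is bounded by $\sup_{u\in\{\langle A,U\rangle\}}\int_{B_p((1+\epsilon)r)}u^2\,dv$, we get
\[
\sum_{i=1}^k u_i^2(x)\le \frac{C(n)(a^{-1}+1)^{\frac n2}\,n}{e^{\mu}\,r^n\,[1+\epsilon-r^{-1}\rho(x)]^n}\sup_{u\in\{\langle A,U\rangle\}}\int_{B_p((1+\epsilon)r)}u^2\,dv.
\]
Integrating this pointwise bound over $B_p(r)$ reduces matters to estimating $\int_{B_p(r)}\big[1+\epsilon-r^{-1}\rho(x)\big]^{-n}\,dv(x)=\int_0^r A_p(t)\,h(t)\,dt$ with $h(t)=(1+\epsilon-r^{-1}t)^{-n}$ nondecreasing.

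For the radial integral I would invoke the area comparison $A_p(t)\le C(n)t^{n-1}$ for $t\ge r(p,p_0)$ from Remark \ref{areacomp}: split the integral at $t=r(p,p_0)$, bound the inner piece by a constant $C(r(p,p_0))$ depending only on $r(p,p_0)$, and for the outer piece use $A_p(t)\le C(n)t^{n-1}\le C(n)r^{n-1}$ together with
\[
\int_0^r(1+\epsilon-r^{-1}t)^{-n}\,dt=\frac{r}{n-1}\big[\epsilon^{1-n}-(1+\epsilon)^{1-n}\big],
\]
so that $\int_0^r A_p(t)h(t)\,dt\le C(r(p,p_0))+\tfrac{C(n)}{n-1}r^n\epsilon^{1-n}\le C(n)r^n\epsilon^{1-n}$ once $r\gg r(p,p_0)$. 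Combining with the displayed pointwise estimate yields the claimed bound. The main obstacle, as in Lemma \ref{lemm2}, is precisely the absence of a genuine volume comparison on shrinkers: the step $A_p(t)\le C(n)t^{n-1}$ is the substitute, valid only for $t\ge r(p,p_0)$, which is why the conclusion requires $r\gg r(p,p_0)$; everything else is a routine transcription of the Schr\"odinger mean value inequality and the elementary radial integral.
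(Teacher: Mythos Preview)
Your proposal is correct and follows essentially the same argument as the paper: the paper likewise reduces to Corollary \ref{corelli2} via the observation $(\Delta-a\mathrm{R})u_1^2=a\mathrm{R}u_1^2+2|\nabla u_1|^2\ge 0$, obtains the identical pointwise bound, integrates over $B_p(r)$, and then simply cites the radial-integral estimate $\int_{B_p(r)}[1+\epsilon-r^{-1}\rho(x)]^{-n}\,dv\le C(n)r^n\epsilon^{1-n}$ already proved in Lemma \ref{lemm2} (which you have reproduced in full). There is no substantive difference in approach.
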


\begin{proof}[Proof of Lemma \ref{parallemm2}]
We will apply Li's proof trick \cite{[Li]}. For any $x\in B_p(r)$, let
$K_x=\{u\in K|u(x)=0\}$,
then it is at most codimension one of $K$. By an orthonormal change of basis,
we assume $u_i\in K_x$, $2\le i\le k$ and $\sum^k_{i=1}u^2_i(x)=u^2_1(x)$.
Since $(\Delta-a\mathrm{R})u^2_1=a\mathrm{R}u_1^2+2|\nabla u_1|^2\ge 0$,
by Corollary \ref{corelli2}, for any $0<\epsilon<\frac 12$, we have
\begin{equation*}
\begin{aligned}
\sum^k_{i=1}u^2_i(x)=u^2_1(x)
&\le\frac{C(n)(a^{-1}+1)^{\frac n2}}{e^{\mu}\,\big[(1+\epsilon)r-\rho(x)\big]^n}\int_{B_x((1+\epsilon)r-\rho(x))}u^2_1(y)dv(y)\\
&\le\frac{C(n)(a^{-1}+1)^{\frac n2}\, n}{e^{\mu}\,r^n\,\big[1+\epsilon-r^{-1}\rho(x)\big]^n}
\sup_{u\in\{\langle A,U\rangle\}}\int_{B_p((1+\epsilon)r)}u^2 dv,
\end{aligned}
\end{equation*}
for any $r>0$, where $\rho(x)$ is the distance function from $p$ to $x$.
Integrating over $B_p(r)$ yields
\begin{equation}
\begin{aligned}\label{keyimpor2}
\sum^k_{i=1}\int_{B_p(r)}u^2_i dv
&\le\frac{C(n)(a^{-1}+1)^{\frac n2}}{e^{\mu}\,r^n}
\sup_{u\in\{\langle A,U\rangle\}}\int_{B_p((1+\epsilon)r)}u^2 dv\\
&\quad\times \int_{B_p(r)}\big[1+\epsilon-r^{-1}\rho(x)\big]^{-n}dv(x).
\end{aligned}
\end{equation}
If we define $h(t):=(1+\epsilon-r^{-1}t)^{-n}$, then
$h'(t)=nr^{-1}(1+\epsilon-r^{-1}t)^{-n-1}\ge 0$ and
\begin{equation}\label{identity}
\int_{B_p(r)}\big[1+\epsilon-r^{-1}\rho(x)\big]^{-n}dv(x)=\int^r_0A_p(t)h(t)dt,
\end{equation}
where $A_p(t):=\mathrm{Area}(\partial B_p(t))$ satisfies $V_p'(t)=A_p(t)$ almost
everywhere. Now we will apply the property of $h(t)$ and Remark \ref{areacomp} to
give an upper estimate for $\int^r_0A_p(t)h(t)dt$.
By Remark \ref{areacomp}, $A_p(t)\le C(n)t^{n-1}$ for $t\ge r(p,p_0)$, and hence
\begin{equation*}
\begin{aligned}
\int^r_0A_p(t)h(t)dt&=\int^{r(p,p_0)}_0A_p(t)h(t)dt+\int^r_{r(p,p_0)}A_p(t)h(t)dt\\
&\le C(r(p,p_0))+C(n)\int^r_{r(p,p_0)}t^{n-1}h(t)dt\\
&\le C(r(p,p_0))+C(n)r^{n-1}\int^r_0h(t)dt\\
&= C(r(p,p_0))+C(n)r^{n-1}\int^r_0(1+\epsilon-r^{-1}t)^{-n}dt\\
&= C(r(p,p_0))+\frac{C(n)}{n-1}r^n\left[\epsilon^{1-n}-(1+\epsilon)^{1-n}\right]\\
&\le C(n)\,r^n\epsilon^{1-n}
\end{aligned}
\end{equation*}
for $r>>r(p,p_0)$, where $C(r(p,p_0))$ is a constant depending only on $r(p,p_0)$.
Combining this with \eqref{identity} and
\eqref{keyimpor2} proves the lemma.
\end{proof}

Now we apply Lemma \ref{parallemm2}, Remark \ref{schlowin} and Corollary
\ref{corelli2} to give a explicit estimate for the dimension of $\mathcal{H}_d(a,M)$.

\begin{proof}[Proof of Theorem \ref{mainpoten}]
As in Li's proof strategy \cite{[Li]}, for any point $p\in M$ and $\beta>1$,
let $K$ be any nonzero $k$-dimensional subspace $K\subset \mathcal{H}_d(a,M)$,
and $\{u_i\}^k_{i=1}$ be an orthonormal basis of $K$ with respect to the inner
product $A_{\beta r_1}(u,v)=\int_{B_p(\beta r_1)} uv\, dv$. Now we want to
estimate $k$. By Proposition \ref{volesti2}, $V_p(r)\le C(n) r^n$ for all
$r\ge r(p,p_0)$. Using this to Remark \ref{schlowin}, for any $\delta>0$,
letting $r_0=r(p,p_0)$, there exists $r_1>r_0$ such that
\begin{equation}\label{estip}
\sum^k_{i=1}\int_{B_p(r_1)}u_i^2\, dv\ge k \cdot\beta^{-(2d+n+\delta)}.
\end{equation}
We first apply \eqref{estip} to give a rough upper estimate for $k$ when $d\geq 0$.
Since $(\Delta-a\mathrm{R})\sum^k_{i=1}u^2_i(x)\ge 0$, then $\sum^k_{i=1}u^2_i(x)$
is subharmonic, by the maximum principle, there exists a point $q\in \partial B_p(r_1)$
such that $\sum^k_{i=1}u^2_i(x)\le \sum^k_{i=1}u^2_i(q)$ for all $x\in B_p(r_1)$.
Now we can find a $k\times k$ orthogonal matrix $(a_{ij})$ such
that $v_i(x)=\sum^k_{j=1}a_{ij}u_j$ with $v^2_1(q)=\sum^k_{i=1}u^2_i(q)$
and $v_j(q)=0$ for $2\le j\le k$. So $\sum^k_{i=1}u^2_i(x)\le v^2_1(q)$.
Integrating this yields
\begin{equation}\label{jifengg}
\sum^k_{i=1}\int_{B_p(r_1)}u^2_i(x)\,dv(x)\le V_p(r_1)v^2_1(q).
\end{equation}
On the other hand, applying Corollary \ref{corelli2} to $v^2_1$ and noticing
$B_q(r_1)\subset B_p(2r_1)$, we get
\begin{equation*}
\begin{aligned}
V_p(r_1)v^2_1(q)&\le V_p(r_1)C(n)\frac{(c_0+1)^{\frac n2}}{e^{\mu}\,r_1^n}\int_{B_q(r_1)}v^2_1(y)dv(y)\\
&\le V_p(r_1)C(n)\frac{(c_0+1)^{\frac n2}}{e^{\mu}\,r_1^n}\int_{B_p(2r_1)}v^2_1(y)dv(y)\\
&\le C(n)(c_0+1)^{\frac n2}e^{-\mu}\int_{B_p(2r_1)}v^2_1(y)dv(y),
\end{aligned}
\end{equation*}
where we used $V_p(r_1)\le C(n) r_1^n$ in the last inequality. Combining this with
\eqref{jifengg} and \eqref{estip} for $\beta=2$, since $\int_{B_p(2r_1)}v^2_1(y)dv(y)=1$,
we get
\[
k \cdot 2^{-(2d+n+\delta)}\le C(n)(c_0+1)^{\frac n2}e^{-\mu}.
\]
Since $\delta>0$ is arbitrary and subspace $K$ is arbitrary, then
$k \le C(n)(c_0+1)^{\frac n2}e^{-\mu}\,4^d$ and first estimate of Theorem
\ref{mainpoten} follows.

Next we prove the second estimate of Theorem \ref{mainpoten}.
Letting $\beta=1+\epsilon$ in Lemma \ref{parallemm2},
\[
\sum^k_{i=1}\int_{B_p(r)}u^2_i\,dv\le C(n)(c_0+1)^{\frac n2}e^{-\mu}\,\epsilon^{1-n}
\]
for all $r>>r(p,p_0)$, because $\int_{B_p((1+\epsilon)r)}u^2 dv=1$ for
$u\in\{\langle A,U\rangle\}$, where $\{u_i\}^k_{i=1}$ is an orthonormal basis
of $K\subset \mathcal{H}_d(a, M)$ with respect to the inner product
$A_{\beta r}(u,v)=\int_{B_p(\beta r)} uv\, dv$. Combining the above inequality
and \eqref{estip} with a possibly different $r_1$, for $d\geq 1$, setting
$\epsilon=\tfrac{1}{2d}$ and letting $\delta\rightarrow 0$, we get
\begin{equation*}
\begin{aligned}
k&\le C(n)\left(1+\tfrac{1}{2d}\right)^{(2d+n+\delta)}\left(\tfrac{1}{2d}\right)^{(1-n)}(c_0+1)^{\frac n2}e^{-\mu}\\
&\leq C(n)(c_0+1)^{\frac n2}e^{-\mu}\,d^{n-1},
\end{aligned}
\end{equation*}
because $(1+\tfrac{1}{2d})^{(2d+n+\delta)}$ is bounded above. Since the
$k$-dimensional subspace $K$ is arbitrary, the second estimate of
Theorem \ref{mainpoten} follows.
\end{proof}

\section{Polynomial growth ancient caloric functions}\label{sec6}

A natural generalization of the ($f$-)harmonic function is a ancient solution,
defined on all negative time, with polynomial growth of the ($f$-)heat equation. In this section,
we will generalize the preceding results to the caloric function setting.

For an $n$-dimensional Riemannian manifold $(M,g)$ and a smooth function
$f$ on $M$, a space-time function $u(x,t)$ is called $f$-caloric function if it satisfies
the $f$-heat equation
\[
(\Delta_f-\partial_t)u=0.
\]
For a fixed constant $d\ge0$, we denote by $\mathcal{P}^f_d(M)$ the linear
space of all ancient $f$-caloric functions with polynomial growth of degree at most $d$
satisfying that there exist some point $p\in M$ and a constant $C(u)$ depending on $u$,
\[
\sup_{B_p(r)\times [-r^2,0]}|u|\le C(u)(1+r)^d
\]
for sufficiently large $r$. When $f$ is constant, $\mathcal{P}^f_d(M)$ is simply written
as $\mathcal{P}_d(M)$, and it is a linear space of ancient caloric functions
with polynomial growth of degree at most $d$.

In \cite{[Ca1],[Ca2]}, Calle initiated the study of dimension bounds for
$\mathcal{P}_d(M)$, which plays an important role in understanding the higher
codimension mean curvature flow \cite{[CoMi3]}. When  $(M,g)$ has nonnegative
Ricci curvature, Souplet and Zhang used the gradient estimate technique to
prove that $\mathrm{dim} \mathcal{P}_d(M)=1$ for all $d<1$ (see Theorem 1.2 (b)
in \cite{[SZ]}). For each $d\geq1$, Lin and Zhang \cite{[LiZh]} proved that
$\mathrm{dim} \mathcal{P}_d(M)\le C(n)d^{n+1}$. Based on a Lin-Zhang's
observation that ancient caloric functions of polynomial growth are
polynomials in time (Theorem 1.2 (b) in \cite{[LiZh]}), Colding and
Minicozzi \cite{[CoMi2]} improved Lin-Zhang's estimate and got that
\[
\mathrm{dim} \mathcal{P}_d(M)\le C(n)d^n
\]
for $d\geq1$. The power $n$ is sharp because
$\mathrm{dim}\mathcal{P}_d(\mathbb{R}^n)\sim C(n)d^n$ as $d\to \infty$.
Recently, Colding and Minicozzi \cite{[CoMi3]} developed some new technique
and gave a sharp bound for the codimension of an ancient mean curvature flow
by the entropy.

In this section, we first prove a sharp dimension estimate of the space
$\mathcal{P}_d(M)$ on a shrinker when the scalar curvature is of at least
quadratic decay.

\begin{theorem}\label{Main1c}
Let $(M,g, f)$ be an $n$-dimensional complete non-compact shrinker satisfying
\eqref{Eq1} and \eqref{Eq2}. If there exists a nonnegative constant $c_0$
such that
\[
\mathrm{R}(x)\cdot r^2(x,o)\le c_0,
\]
then the dimension of $\mathcal{P}_d(M)$ is finite for each
$d\geq 1$. Indeed, for each $d\ge 1$,
\[
\mathrm{dim}\mathcal{P}_d(M)\le C(n)(c_0+1)^{\frac n2}e^{-\mu}\,d\cdot4^d,
\]
where $\mu$ is the Perelman's entropy functional.
\end{theorem}

\begin{proof}[Proof of Theorem \ref{Main1c}]
We apply Theorem \ref{main1} to prove Theorem \ref{Main1c}. By Proposition
\ref{volesti2}, the volume of geodesic ball is at most Euclidian volume growth.
Using this property and a result of Colding and Minicozzi (see Theorem 0.3
in \cite{[CoMi2]}), we immediately get
\[
\mathrm{dim}\mathcal{P}_{2m}(M)\le (m+1)\,\mathrm{dim}\mathcal{H}_{2m}(M)
\]
for all $m\geq1$. Combining this with Theorem \ref{main1} completes the proof.
\end{proof}

We also prove a dimension estimate of $\mathcal{P}^f_d(M)$
on shrinkers with out any assumption.

\begin{theorem}\label{main2}
Let $(M,g, f)$ be an $n$-dimensional complete non-compact shrinker satisfying
\eqref{Eq1} and \eqref{Eq2}. Then, for each $d\ge 1$,
\[
\mathrm{dim}\mathcal{P}^f_d(M)\leq C(n)d
\]
and for each $0\le d<1$,
\[
\mathrm{dim}\mathcal{P}^f_d(M)=1.
\]
\end{theorem}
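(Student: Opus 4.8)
The plan is to deduce Theorem \ref{main2} from Theorem \ref{main0} (that $\dim\mathcal{H}^f_d(M)=1$ for every $d\ge 0$) together with the structure theory of ancient caloric functions. First I would record that a shrinker is an unusually favourable weighted manifold for this purpose: its Bakry--\'Emery tensor satisfies $\mathrm{Ric}_f:=\mathrm{Ric}+\mathrm{Hess}\,f=\tfrac12 g\ge 0$, the weighted volume $\int_M e^{-f}\,dv$ is finite by \eqref{Eq2}, and by Proposition \ref{meancom} one has the weighted Laplacian comparison $\Delta_f r\le 0$ for all sufficiently large $r$. These three facts are precisely what is needed to run the gradient estimate technique of Souplet--Zhang \cite{[SZ]} and Wu \cite{[Wu]} for the $f$-heat equation: the Bochner formula controls $\Delta_f|\nabla u|^2$ from below through $\mathrm{Ric}_f$, while Proposition \ref{meancom} provides the Laplacian comparison that replaces the usual consequence of a Ricci bound (which would fail here since $f$ is unbounded). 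Consequently $\Delta_f-\partial_t$ on $(M,g,f)$ satisfies a local parabolic mean value inequality and a Gaussian-type $f$-heat kernel upper bound.

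With those analytic inputs I would invoke the Colding--Minicozzi structure theorem \cite{[CoMi2]}, in the form valid for the drift operator: every $u\in\mathcal{P}^f_d(M)$ is a polynomial in $t$ of degree at most $m:=\lfloor d/2\rfloor$, say $u(x,t)=\sum_{j=0}^m t^j u_j(x)$. Substituting into $(\Delta_f-\partial_t)u=0$ and matching powers of $t$ gives $\Delta_f u_j=(j+1)u_{j+1}$ for $0\le j\le m$, with $u_{m+1}\equiv0$; in particular $u_m$ is $f$-harmonic and $u$ is determined by $u_0$ via $u_j=\tfrac1{j!}\Delta_f^{\,j}u_0$. Moreover a standard interpolation on the interval $t\in[-\rho^2,0]$, using the growth bound for $u$, shows that each coefficient $u_j$ has polynomial growth (of degree at most $d$, indeed at most $d-2j$), so that $u_j$ lies in a polynomial-growth function space and $u_m\in\mathcal{H}^f_d(M)$.

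The dimension count is then the usual downward induction. Writing $V_j\subset\{\text{polynomial growth functions on }M\}$ for the space of all $j$-th coefficients arising from $\mathcal{P}^f_d(M)$, the linear map $u\mapsto u_0$ embeds $\mathcal{P}^f_d(M)$ into $V_0$, and $\Delta_f\colon V_j\to V_{j+1}$ has kernel contained in $\mathcal{H}^f_d(M)$; since $V_{m+1}=0$ this yields, using Theorem \ref{main0},
\[
\dim\mathcal{P}^f_d(M)\le\dim V_0\le(m+1)\dim\mathcal{H}^f_d(M)=m+1=\lfloor d/2\rfloor+1\le C(n)\,d
\]
for $d\ge 1$. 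For $0\le d<1$ we have $m=\lfloor d/2\rfloor=0$, so every $u\in\mathcal{P}^f_d(M)$ is independent of $t$ and hence $f$-harmonic of polynomial growth; Theorem \ref{main0} forces it to be constant, and since constants always belong to $\mathcal{P}^f_d(M)$ we conclude $\dim\mathcal{P}^f_d(M)=1$.

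The main obstacle is the first step: making the Colding--Minicozzi ``ancient caloric functions are polynomial in time'' theorem available for $\Delta_f-\partial_t$ on a shrinker. Their argument requires a parabolic mean value (or heat-kernel) input together with polynomial growth control on the coefficients $u_j$; although the finite weighted volume and $\mathrm{Ric}_f=\tfrac12 g$ make this very plausible, the unbounded potential $f$ prevents one from quoting the Riemannian statement directly, and the genuine work lies in establishing these estimates for the drift operator from Proposition \ref{meancom} and the gradient-estimate machinery of \cite{[SZ],[Wu]}. (One might even hope to improve the $d\ge1$ bound to $\dim\mathcal{P}^f_d(M)=1$ as well, by integrating the identity $\Delta_f u_{m-1}=m\,u_m$ against $e^{-f}\,dv$ to force the constant $u_m$ to vanish and then iterating downward; this, however, would require a polynomial gradient bound for polynomial-growth solutions of $\Delta_f v=\mathrm{const}$, which I would set aside.)
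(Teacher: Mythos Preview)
Your approach for the first estimate ($d\ge 1$) is exactly the paper's: use the finite weighted volume $\int_M e^{-f}\,dv<\infty$ to adapt the Colding--Minicozzi argument \cite{[CoMi2]} to the drift operator, obtain $\dim\mathcal{P}^f_{2m}(M)\le(m+1)\dim\mathcal{H}^f_{2m}(M)$, and invoke Theorem~\ref{main0}. You correctly flag the passage to $\Delta_f$ as the nontrivial step; the paper handles it by observing that the only volume input needed is the uniform weighted-volume bound, and then states that ``following the same argument'' suffices.

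For the case $0\le d<1$ you take a genuinely different, more economical route. You deduce it from the same Colding--Minicozzi structure theorem: since $m=\lfloor d/2\rfloor=0$, any $u\in\mathcal{P}^f_d(M)$ is time-independent, hence $f$-harmonic of polynomial growth, hence constant by Theorem~\ref{main0}. The paper instead develops an independent elliptic-type gradient estimate (Proposition~\ref{prpest}) for positive solutions of the $f$-heat equation, via the Souplet--Zhang cutoff argument and the weighted Laplacian comparison of Proposition~\ref{meancom}, and applies it to $u+2D_{2R}$ to force $\nabla u\equiv0$ directly. Your argument is shorter and needs only the single analytic input already required for $d\ge1$; the paper's argument buys an additional result of independent interest (Proposition~\ref{prpest}) and does not rely on the Colding--Minicozzi machinery for the sublinear case.
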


The proof of first estimate is the same as the case of Theorem \ref{Main1c}, and
we need to use Theorem \ref{main0}. The proof of second part is a little
complicated, where a key ingredient is an elliptic type gradient
estimate, which seems to be of independent interest in a shrinker.
\begin{proposition}\label{prpest}
Let $(M,g, f)$ be an $n$-dimensional complete non-compact shrinker
satisfying \eqref{Eq1}  and \eqref{Eq2}. Fix any fixed point $p\in M$ and a number $R\ge2\sigma_0$,
where constant $\sigma_0:=6\sqrt{f(p)}+8n-\frac 83$ (which is determined in Proposition
\ref{meancom}). If $0<u(x,t)\le D$ for some constant $D$, is a smooth solution to $f$-heat equation
in $Q_{R,T}:=B_p(R)\times[t_0-T,t_0]\subset M\times(-\infty,\infty)$,
where $t_0\in \mathbb{R}$ and $T>0$, then
\begin{equation}\label{heor1}
|\nabla\ln u|\le C(n)\left(\frac 1R+\frac{1}{\sqrt{t-t_0+T}}\right)
\left(1+\ln \frac Du\right)
\end{equation}
in $Q_{R/2, T}$ with $t\neq t_0-T$.
\end{proposition}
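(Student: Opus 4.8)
The plan is to adapt the Souplet--Zhang elliptic gradient estimate technique \cite{[SZ]} (see also \cite{[Wu]}), exploiting the structural fact that on a shrinker the weighted Ricci (Bakry--\'Emery) tensor $\mathrm{Ric}_f=\mathrm{Ric}+\mathrm{Hess}\,f=\tfrac12 g$ is automatically nonnegative, so no curvature hypothesis is needed, while the weighted Laplacian comparison of Proposition \ref{meancom} supplies the control of a distance cutoff that in the classical manifold case would come from a Ricci lower bound. First normalize by setting $h:=\log(u/D)\le 0$; using $\Delta_f u=\partial_t u$ one checks directly that
\[
(\Delta_f-\partial_t)h=-|\nabla h|^2 .
\]
Put $w:=\dfrac{|\nabla h|^2}{(1-h)^2}=\bigl|\nabla\log(1-h)\bigr|^2$, so that $1-h=1+\ln\tfrac Du\ge 1$ and $|\nabla\ln u|=|\nabla h|=(1-h)\sqrt{w}$. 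Hence it suffices to prove
\[
w(x,t)\le C(n)\Bigl(\tfrac{1}{R^2}+\tfrac{1}{\,t-t_0+T\,}\Bigr)\qquad\text{on }Q_{R/2,T},\ t\ne t_0-T .
\]

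For the differential inequality, apply the weighted Bochner formula to $\bigl|\nabla\log(1-h)\bigr|^2$. Using $\mathrm{Ric}_f=\tfrac12 g$ (which produces only an extra nonnegative term, dropped) and the evolution equation for $h$, a direct though somewhat lengthy computation yields
\[
(\Delta_f-\partial_t)w\ \ge\ 2(1-h)\,w^2+\frac{2h}{1-h}\,\langle\nabla h,\nabla w\rangle .
\]
The essential features are the gradient term, which is absorbed at a maximum point, and the coefficient $2(1-h)\ge 2$ in front of $w^2$, which makes the resulting quadratic inequality close.

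Now localize. Fix $(x_1,t_1)\in Q_{R/2,T}$ with $t_1>t_0-T$, and take a cutoff $\phi=\bar\phi\bigl(r(\cdot,p)\bigr)$ with $0\le\bar\phi\le1$, $\bar\phi'\le 0$, $\bar\phi\equiv 1$ on $[0,R/2]$, $\bar\phi\equiv 0$ on $[R,\infty)$, $|\bar\phi'|^2/\bar\phi\le C/R^2$ and $\bar\phi''\ge -C/R^2$. Since $R\ge 2\sigma_0$, one has $\mathrm{supp}\,\nabla\phi\subset\{R/2\le r\le R\}\subset\{r\ge\sigma_0\}$, where Proposition \ref{meancom} gives $\Delta_f r\le 0$; together with $\bar\phi'\le 0$ this yields $\Delta_f\phi=\bar\phi''+\bar\phi'\,\Delta_f r\ge\bar\phi''\ge -C/R^2$ (on the cut locus one invokes the usual Calabi barrier argument). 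Let $(x^*,t^*)$ be a point where $F:=(t-t_0+T)\,\phi\,w$ attains its maximum over $B_p(R)\times[t_0-T,t_1]$; we may assume $F(x^*,t^*)>0$, so $x^*$ is interior and $t^*>t_0-T$, and there $\nabla F=0$ (hence $\nabla w=-w\,\nabla\phi/\phi$), $\Delta_f F\le 0$, $\partial_t F\ge 0$. Substituting the differential inequality for $w$ into $(\Delta_f-\partial_t)F\le 0$ and estimating the terms with $|h|\le 1-h$, $|\nabla h|=(1-h)\sqrt w$, $|\nabla\phi|^2/\phi\le C/R^2$, $\Delta_f\phi\ge -C/R^2$, and Young's inequality to absorb the cubic cross term into $(1-h)F^2$, one reaches at $(x^*,t^*)$ an inequality of the form $2(1-h)F^2\le F+C\,\tfrac{t^*-t_0+T}{R^2}F+C\,\tfrac{(t^*-t_0+T)^{1/2}}{R}(1-h)F^{3/2}$; dividing by $F$ and using $1-h\ge 1$ gives
\[
F(x^*,t^*)\ \le\ C(n)\Bigl(1+\frac{t^*-t_0+T}{R^2}\Bigr)\ \le\ C(n)\Bigl(1+\frac{t_1-t_0+T}{R^2}\Bigr).
\]
Since $\phi(x_1)=1$, we have $(t_1-t_0+T)\,w(x_1,t_1)=F(x_1,t_1)\le F(x^*,t^*)$, which is exactly the desired bound on $w(x_1,t_1)$, and hence \eqref{heor1}.

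The main obstacle is the bookkeeping: performing the weighted Bochner computation for $w$ correctly, and then, in the maximum-principle step, arranging that the single good term $2(1-h)w^2$ dominates all the bad terms --- which carry assorted powers of $1-h$, of $w$, and of $t-t_0+T$ --- uniformly in how negative $h$ is (equivalently, how small $u$ is). The only soliton-specific inputs are $\mathrm{Ric}_f\ge 0$ and Proposition \ref{meancom}; these are precisely what allow the argument to run with no curvature assumption.
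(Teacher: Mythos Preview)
Your proposal is correct and follows essentially the same route as the paper: normalize to $h=\log(u/D)\le 0$, set $w=|\nabla h|^2/(1-h)^2$, derive the differential inequality $(\Delta_f-\partial_t)w\ge 2(1-h)w^2+\tfrac{2h}{1-h}\langle\nabla h,\nabla w\rangle$ using $\mathrm{Ric}_f=\tfrac12 g\ge 0$, and localize with a radial cutoff controlled via Proposition~\ref{meancom}. The only cosmetic difference is that you multiply by the time factor $(t-t_0+T)$ (the original Souplet--Zhang device) whereas the paper uses a full space--time cutoff $\bar\psi(r,t)$ and then splits into the two cases $x_1\in B_p(\sigma_0)$ versus $x_1\notin B_p(\sigma_0)$; your observation that $\operatorname{supp}\nabla\phi\subset\{r\ge R/2\ge\sigma_0\}$ simply folds both cases together.
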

\begin{proof}[Proof of Proposition \ref{prpest}]
The proof strategy comes from \cite{[Wu]}.
Since \eqref{heor1} is invariant under the scaling $u\to u/D$,
without loss of generality, we assume $0<u\leq 1$. Define
$h(x,t):=\ln u(x,t)$. Then $h(x,t)$ is non-positive and satisfies
$(\Delta_f-\partial_t)h+|\nabla h|^2=0$. Set
\[
\omega:=|\nabla\ln(1-h)|^2
\]
Then $\omega$ in $Q_{R,T}$ satisfies
\begin{equation*}
\begin{aligned}
(\Delta_f-\partial_t)\omega&=\frac{2h}{1-h}\langle \nabla h,\nabla\omega\rangle+2(1-h)\omega^2
+2\frac{\mathrm{Ric}_f(\nabla h,\nabla h)}{(1-h)^2}\\
&\quad+2\left(\frac{|\mathrm{Hess}\,h|^2}{(1-h)^2}
+2\frac{\mathrm{Hess}\,h(\nabla h, \nabla h)}{(1-h)^3}+\frac{|\nabla h|^4}{(1-h)^4}\right).
\end{aligned}
\end{equation*}
Since $\mathrm{Ric}_f\ge 0$ and the last term of above equality is nonnegative, 
$\omega$ in $Q_{R,T}$ further satisfies
\begin{equation}\label{lemmaequ3}
(\Delta_f-\partial_t)\omega\geq\frac{2h}{1-h}\langle \nabla h,\nabla\omega\rangle+2(1-h)\omega^2.
\end{equation}

In the following, we will apply the maximum principle in $Q_{R,T}$ to prove 
\eqref{heor1}. To achieve it, we need  a space-time cut-off function. This 
is also appeared in \cite{[Li-Yau]}, \cite{[SZ]} and \cite{[Wu]}.

\begin{lemma}\label{cutoff}
Fix $t_0\in \mathbb{R}$ and $T>0$. For any $\tau\in(t_0-T,t_0]$, there exists a smooth function
${\overline{\psi}}:[0,\infty)\times[t_0-T,t_0]\to\mathbb R$ satisfying the following four propositions.
\begin{enumerate}
\item $0\le {\overline{\psi}}(r,t)\le 1$ in the space-time set $[0,R]\times[t_0-T,t_0]$,
where the real number $R>0$, which is supported in a open subset of $[0,R]\times[t_0-T,t_0]$.
\item ${\overline{\psi}}(r,t)=1$ and $\partial_r{\overline{\psi}}(r,t)=0$
in $[0,R/2]\times[\tau,t_0]$ and $[0,R/2]\times[t_0-T,t_0]$, respectively.
\item $|\partial_t{\overline{\psi}}|\le\frac{C{\overline{\psi}}^{\frac12}}{\tau-(t_0-T)}$
in $[0,\infty)\times[t_0-T,t_0]$ for some constant $C>0$, and ${\overline{\psi}}(r,t_0-T)=0$
for all $r\in[0,\infty)$.
\item $-\frac{C_\epsilon{\overline{\psi}}^\epsilon}{R}\le \partial_r{\overline{\psi}} \le 0$
and $\left|\partial_r^2{\overline{\psi}}\right|\leq\frac{C_\epsilon{\overline{\psi}}^\epsilon}{R^2}$
in $[0,\infty)\times[t_0-T,t_0]$ for every $0<\epsilon<1$ with some constant
$C_\epsilon$ depending only on $\epsilon$.
\end{enumerate}
\end{lemma}

The outline of proof is as follows. Pick any $\tau\in(t_0-T,t_0]$ and fix
a cutoff function ${\overline{\psi}}(r,t)$ as in Lemma \ref{cutoff}.
We can show that \eqref{heor1} holds at the space-time
point $(x,\tau)$ for all $x$ such that $d(x,p)<R/2$. Since
$\tau\in(t_0-T,t_0]$ is arbitrary, the conclusion follows.

Now we give a detailed discussion. Consider a cutoff function
$\psi:M\times[t_0-T,t_0]\to \mathbb R$, such that
$\psi={\overline{\psi}}(d(x,p),t)\equiv\psi(r,t)$.
Then $\psi(x,t)$ is be a smooth cut-off function, supported in $Q_{R,T}$.
Using Lemma \ref{cutoff}, we carefully estimate each term in the evolution equation
$(\Delta_f-\partial_t)(\psi\omega)$ at a space-time point where $\psi\omega$ attains
its maximum. By \eqref{lemmaequ3}, we get
\begin{equation}
\begin{aligned}\label{lemdx3}
\left(\Delta_f-\partial_t\right)(\psi\omega)&\ge2\left(\frac{h}{1-h}\nabla
h+\frac{\nabla\psi}{\psi}\right)\cdot\nabla(\psi\omega)+2\psi(1-h)\omega^2\\
&\quad-2\left(\frac{h}{1-h}\nabla h\cdot\nabla\psi+\frac{|\nabla\psi|^2}{\psi}\right)\omega
+(\Delta_f-\partial_t)\psi\cdot\omega.
\end{aligned}
\end{equation}
Let $(x_1,t_1)$ be a maximum space-time point for the function
$\psi\omega$ in the following closed set
\[
\left\{(x,t)\in M\times[t_0-T,\tau]\,|d(x,p)\leq R\right\}.
\]
Assume $(\psi\omega)(x_1,t_1)>0$; otherwise, $\omega(x,\tau)\leq0$ and
\eqref{heor1} naturally holds at $(x,\tau)$ whenever $d(x, p)<R/2$.
Here $t_1\neq t_0-T$, since we assume $(\psi\omega)(x_1,t_1)>0$. By the
standard Calabi's argument \cite{[Cala]}, we can also assume that
$\psi(x,t)$ is smooth at $(x_1,t_1)$ due to. Since $(x_1,t_1)$ is a maximum
space-time point, at $(x_1,t_1)$,
\[
\Delta_f(\psi\omega)\leq0,\quad(\psi\omega)_t\geq0
\quad \mathrm{and}\quad\nabla(\psi\omega)=0.
\]
Using these estimates, at $(x_1,t_1)$, \eqref{lemdx3} can be simplified as
\begin{equation}\label{lefor}
\begin{aligned}
2\psi(1-h)\omega^2\leq\left(\frac{2h}{1-h}\nabla h\cdot\nabla\psi
+2\frac{|\nabla\psi|^2}{\psi}\right)\omega-(\Delta_f\psi)\omega+\psi_t\omega.
\end{aligned}
\end{equation}

The rest of this part we will apply \eqref{lefor} to give \eqref{heor1}.
We shall divide two case according the point $x_1$ whether lies in
$B_p(\sigma_0)$ or not.

If $x_1\in B_p(\sigma_0)$, where $\sigma_0:=6\sqrt{f(p)}+8n-\frac 83$ is
defined in Proposition \ref{meancom}, then $\psi$ is constant in space
direction in $B_p(R/2)$ by our assumption, where $R\geq2\sigma_0$. So
\eqref{lefor} indeed yields the estimate
\[
\omega\le \frac{\psi_t}{2\psi}\leq\frac{C}{\tau-(t_0-T)}
\]
at $(x_1,t_1)$, where we used $1-h\geq 1$ and Lemma \ref{cutoff} (3).
Since $\psi(x,\tau)=1$ when $d(x,p)<R/2$ by the proposition
(2) in Lemma \ref{cutoff}, the above estimate gives that
\[
\omega(x,\tau)=(\psi\omega)(x,\tau)
\le(\psi\omega)(x_1,t_1)
\le\omega(x_1,t_1)
\le\frac{C}{\tau-t_0+T}
\]
for all $x\in M$ such that $d(x,p)<R/2$. By the definition of $w(x,\tau)$
and a fact that $\tau\in(t_0-T,t_0]$ was chosen arbitrarily, we in fact
prove that
\[
\frac{|\nabla
h|}{(1-h)}(x,t)\leq\frac{C}{\sqrt{t-t_0+T}}
\]
for all $(x,t)\in Q_{R/2,T}:=B_p(R/2)\times[t_0-T,t_0]$ with
$t\neq t_0-T$. Then \eqref{heor1} follows since $h=\ln(u/D)$ with
$D$ scaled to $1$.

Thus we consider the case: $x_1\not\in B_p(\sigma_0)$. Since $\text{Ric}_f=\frac 12g$
and $d(x_1,p)\geq \sigma_0$ in $B_p(R)$, by Proposition \ref{meancom},
we have the weighted Laplacian comparison
\begin{equation}\label{gencomp}
\Delta_f\,r(x_1)\le 0.
\end{equation}
This comparison result will be used later. Below we will carefully estimate upper
bounds for each term of the right-hand side of \eqref{lefor}. Notice that
the Young's inequality
\[
ab\le {a^m}/{m}+{b^n}/{n},\quad \forall\,\,\, m,n>0
\,\,\,\mathrm{with}\,\,\, 1/m+1/n=1
\]
will be repeatedly used in the following. Meanwhile we let $c$ denote a
constant depending only on $n$ whose value may change from line to line.

First, for the first term of the right hand side of \eqref{lefor}, we have
the estimates
\begin{equation}
\begin{aligned}\label{term1}
\left(\frac{2h}{1-h}\nabla h\cdot\nabla\psi\right)\omega
&\leq2|h|\cdot|\nabla\psi|\cdot\omega^{3/2}\\
&\leq\psi(1-h)\omega^2+c
\frac{(h|\nabla\psi|)^4}{[\psi(1-h)]^3}\\
&\leq\psi(1-h)\omega^2+c\frac{h^4}{R^4(1-h)^3}.
\end{aligned}
\end{equation}

For the second term of the right hand side of (\ref{lefor}), we have
\begin{equation}
\begin{aligned}\label{term2}
2\frac{|\nabla\psi|^2}{\psi}\omega
&\leq\frac 16\psi\omega^2+c
\left(\frac{|\nabla\psi|^2}{\psi^{3/2}}\right)^2\\
&\leq\frac 16\psi\omega^2+\frac{c}{R^4}.
\end{aligned}
\end{equation}

For the third term of the right hand side of \eqref{lefor}, since $\psi$
is a radial function, then at $(x_1,t_1)$, using \eqref{gencomp} we can
estimate it as follows
\begin{equation}
\begin{aligned}\label{term3}
-(\Delta_f\psi)\omega&=-\left[(\partial_r\psi)\Delta_fr+(\partial^2_r\psi)\cdot
|\nabla r|^2\right]\omega\\
&\leq(-\partial^2_r\psi) \omega\\
&\leq\frac 16\psi\omega^2+c\left(\frac{|\partial^2_r\psi|}{\psi^{1/2}}\right)^2\\
&\leq\frac 16\psi\omega^2+\frac{c}{R^4},
\end{aligned}
\end{equation}
where in the last inequlity we used Lemma \ref{cutoff} (4).

Finally, we estimate the last term:
\begin{equation}
\begin{aligned}\label{term4}
|\psi_t|\omega&\leq\frac 16\left(\psi^{1/2}\omega\right)^2+c
\left(\frac{|\psi_t|}{\psi^{1/2}}\right)^2\\
&\leq\frac 16\psi\omega^2+\frac{c}{(\tau-t_0+T)^2}.
\end{aligned}
\end{equation}

\

Substituting \eqref{term1}-\eqref{term4} into the right hand side of \eqref{lefor},
we have
\[
\psi(1-h)\omega^2\le\frac{ch^4}{R^4(1-h)^3}
+\frac 12\psi\omega^2+\frac{c}{R^4}+\frac{c}{(\tau-t_0+T)^2}
\]
at $(x_1,t_1)$. Since $1-h\geq1$, then
\[
\psi\omega^2\le\frac{ch^4}{R^4(1-h)^4}+\frac 12\psi\omega^2
+\frac{c}{R^4}+\frac{c}{(\tau-t_0+T)^2}
\]
at $(x_1,t_1)$. Moreover, since $h^4\le (1-h)^4$, the above
inequality implies
\[
(\psi^2\omega^2)(x_1,t_1)\le(\psi\omega^2)(x_1,t_1)
\le\frac{c}{R^4}+\frac{c}{(\tau-t_0+T)^2}.
\]
Since $\psi(x,\tau)=1$ when $d(x,p)<R/2$ by Lemma \ref{cutoff} (2),
from the above estimate, we get
\[
\omega(x,\tau)=(\psi\omega)(x,\tau)
\le(\psi\omega)(x_1,t_1)
\le\frac{c}{R^2}+\frac{c}{\tau-t_0+T}
\]
for all $x\in M$ such that $d(x,p)<R/2$. By the definition of
$w(x,\tau)$ and the fact that $\tau\in(t_0-T,t_0]$ was chosen
arbitrarily, we in fact show that
\[
\frac{|\nabla h|}{(1-h)}(x,t)\leq\frac cR+\frac{c}{\sqrt{t-t_0+T}}
\]
for all $(x,t)\in Q_{R/2,T}:=B_p(R/2)\times[t_0-T,t_0]$ with
$t\neq t_0-T$. This implies the conclusion since $h=\ln(u/D)$
with $D$ scaled to $1$ and $R\geq2\sigma_0$.
\end{proof}

Now we complete the proof of Theorem \ref{main2}.
\begin{proof}[Proof of Theorem \ref{main2}]
On a non-compact shrinker $(M,g, f)$, for any $p\in M$ and $r>0$,
\[
V^f_p(r):=\int_{B_p(r)}e^{-f}dv\le\int_Me^{-f}dv<+\infty,
\]
where we used Corollary 1.1 in \cite{[CaZh]}. Using this volume estimate
and following the argument of Colding and Minicozzi \cite{[CoMi2]}, we can prove that
\[
\mathrm{dim}\mathcal{P}^f_{2m}(M)\le (m+1)\,\mathrm{dim}\mathcal{H}^f_{2m}(M)
\]
for all $m\geq1$. Combining this with a fact that $\mathrm{dim}\mathcal{H}^f_{2m}(M)=1$
in Theorem \ref{main0} yields the first part of Theorem \ref{main2}.

Then we prove the second part of Theorem \ref{main2}. For any fixed space-time point
$(x_0, t_0)\in M\times(-\infty,0]$, we let $D_R:=\sup_{(x,t)\in Q_{R,R^2}}|u(x,t)|$,
where $Q_{R,R^2}:=B_{x_0}(R)\times[t_0-R^2,t_0]$. Consider a smooth function
$U(x,t):=u+2D_{2R}$ and it satisfies $D_{2R}\le U(x,t)\le 3D_{2R}$ whenever
$(x,t)\in Q_{2R,4R^2}$. Then, we apply Proposition \ref{prpest} to $U(x,t)$
and obtain
\[
\frac{|\nabla u(x_0, t_0)|}{u(x_0, t_0)+2D_{2R}}
\leq\frac{C(n)}{R}
\]
for $R\geq2\sigma_0$, where $\sigma_0:=6\sqrt{f(x_0)}+8n-\frac 83$. Since
$f$-caloric function $u(x,t)$ is ancient and satisfies the sublinear growth,
by letting $R\to\infty$, then $D_{2R}=o(R)$ and the above estimate gives
$\nabla u(x_0, t_0)=0$. This implies that $u(x,t)$ is constant since
$(x_0, t_0)$ is arbitrary.
\end{proof}


\end{document}